\newtheorem{thm}{Theorem}[section]
\newtheorem{lem}[thm]{Lemma}
\newtheorem{cor}[thm]{Corollary}
\theoremstyle{definition}
\newtheorem{dfn}[thm]{Definition}
\newtheorem{ques}[thm]{Question}
\newtheorem{ex}[thm]{Example}
\newtheorem{nota}[thm]{Notation}
\newtheorem{rmk}[thm]{Remark}
\theoremstyle{remark}
\newtheorem*{ac}{Acknowledgments}
\numberwithin{equation}{thm}
\def\Cok{\operatorname{Coker}}
\def\depth{\operatorname{depth}}
\def\Ext{\operatorname{Ext}}
\def\Gdim{\operatorname{G-dim}}
\def\ge{\geqslant}
\def\grade{\operatorname{grade}}
\def\Hom{\operatorname{Hom}}
\def\image{\operatorname{Im}}
\def\J{\mathrm{J}}
\def\le{\leqslant}
\def\m{\mathfrak{m}}
\def\p{\mathfrak{p}}
\def\r{\operatorname{r}}
\def\syz{\Omega}
\def\Tor{\operatorname{Tor}}
\def\tr{\operatorname{Tr}}
\begin{document}
\allowdisplaybreaks
\title[Ext modules related to syzygies of the residue field]{Ext modules related to syzygies of the residue field}
\author{Yuya Otake}
\address{Graduate School of Mathematics, Nagoya University, Furocho, Chikusaku, Nagoya 464-8602, Japan}
\email{m21012v@math.nagoya-u.ac.jp}

\thanks{2020 {\em Mathematics Subject Classification.} 13D02, 13D07.}
\thanks{{\em Key words and phrases.} $n$-torsionfree module, $n$-syzygy module, $n$-spherical module, projective dimension, Gorenstein ring.}
\begin{abstract}
Let $R$ be a commutative noetherian ring.
In this paper, we find out close relationships between the module $M$ being embedded in a module of projective dimension at most $n$ and the $(n+1)$-torsionfreeness of the $n$th syzygy of $M$.
As an application, when $R$ is local with residue field $k$, we compute the dimensions as $k$-vector spaces of Ext modules related to syzygies of $k$.
\end{abstract}
\maketitle
\section{Introduction}
Throughout this paper, let $R$ be a commutative noetherian ring.
We assume that all modules are finitely generated ones.
Auslander and Bridger \cite{AB} established the theory of the stable category of finitely generated modules over noetherian rings.
The notion of $n$-torsionfree modules, which is a natural generalization of the notion of torsionfree modules over integral domains, plays an important role in stable module theory.
Among other things, Auslander and Bridger studied when $n$-syzygy modules become $n$-torsionfree and proved the following approximation theorem.

\begin{thm}[Auslander--Bridger]\label{abapp}
Let $M$ be an $R$-module and $n\ge0$ an integer. The following are equivalent.
\begin{enumerate}[\rm(1)]
   \item
   The $n$th syzygy $\syz^n M$ of $M$ is $n$-torsionfree.
   \item
   There exists an exact sequence $0\to B\to J\to M\to 0$ of $R$-modules such that $B$ has projective dimension at most $n-1$ and $\Ext^i_R(J,R)=0$ for all integers $1\le i\le n$.
\end{enumerate}
\end{thm}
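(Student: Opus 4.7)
The plan is to prove (1) $\Rightarrow$ (2) and (2) $\Rightarrow$ (1) separately, with the latter being more direct.

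\emph{For (2) $\Rightarrow$ (1).} I would apply the horseshoe lemma to the given short exact sequence to obtain a compatible projective resolution of $J$ from resolutions of $B$ and $M$. Because $\pd B \le n-1$, the resolution of $B$ may be chosen to terminate at degree $n-1$, so that $\syz^n B$ is zero in this resolution and the horseshoe sequence of $n$-th syzygies collapses to a stable isomorphism $\syz^n M \cong \syz^n J$. It then remains to show that $\syz^n J$ is $n$-torsionfree whenever $\Ext^i_R(J, R) = 0$ for $1 \le i \le n$. Dualizing a projective resolution $\cdots \to P_1 \to P_0 \to J \to 0$, the hypothesis forces $0 \to J^* \to P_0^* \to P_1^* \to \cdots$ to be exact through degree $n$, which splices together with the presentation of $\tr \syz^n J$ to give a genuine projective resolution of $\tr \syz^n J$ of length $n+1$. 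Re-dualizing this resolution returns (by $P_i^{**} \cong P_i$) a piece of the original resolution of $J$, which is exact in the middle, so $\Ext^i_R(\tr \syz^n J, R) = 0$ for $1 \le i \le n$.

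\emph{For (1) $\Rightarrow$ (2).} The $n$-torsionfreeness of $\syz^n M$ gives, by dualizing a projective resolution of $\tr \syz^n M$, an exact sequence
\begin{equation*}
0 \to \syz^n M \to F^0 \to F^1 \to \cdots \to F^{n-1} \to X \to 0
\end{equation*}
with each $F^i$ projective and $\Ext^i_R(X, R) = 0$ for $1 \le i \le n$ (one traces $\Ext$-vanishing through the defining short exact sequences and uses projectivity of the $F^i$). I would then combine this with the truncated resolution $0 \to \syz^n M \to P_{n-1} \to \cdots \to P_0 \to M \to 0$ of $M$ along their shared leftmost term $\syz^n M$ by iterated pushouts. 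The resulting construction produces a short exact sequence $0 \to B \to J \to M \to 0$ in which $B$ is assembled from the $F^i$ and $P_j$ and so has $\pd B \le n-1$, while $J$ arises as an iterated extension of $X$ by projective modules, so that $\Ext^i_R(X,R)=0$ together with the projectivity of the $P_j$ propagates to $\Ext^i_R(J, R) = 0$ for $1 \le i \le n$ via repeated long exact sequences.

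The main obstacle lies in (1) $\Rightarrow$ (2), specifically in organizing the pushout bookkeeping. A natural approach is induction on $n$: the base case $n=1$ is a single pushout of $0 \to \syz M \to F^0 \to X \to 0$ against $0 \to \syz M \to P_0 \to M \to 0$, producing $J$ fitting into both $0 \to F^0 \to J \to M \to 0$ and $0 \to P_0 \to J \to X \to 0$, from which the required properties of $J$ and $B = F^0$ are immediate. The inductive step should propagate the construction by pushing out one pair of steps in each resolution at a time, while continually verifying that the new "$B$" still has bounded projective dimension and that no unwanted $\Ext^i(J,R)$ is introduced. The care required at this step is the essential content of the proof.
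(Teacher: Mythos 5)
This theorem is stated without proof in the paper: it is quoted from Auslander--Bridger (it is essentially \cite[Proposition 2.21]{AB}, which the author invokes verbatim at the start of the proof of Theorem \ref{FPD}). So your argument has to be assessed on its own merits rather than against an argument in the text.

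Your proof of $(2)\Rightarrow(1)$ is correct. The horseshoe lemma with a length-$(n-1)$ resolution of $B$ does give $\syz^n J\approx\syz^n M$, and re-dualizing the truncated complex $P_0^*\to\cdots\to P_{n+1}^*\to\tr\syz^n J\to0$ using $P_i^{**}\cong P_i$ computes $\Ext^i(\tr\syz^n J,R)$ for $1\le i\le n$ from the exact portion $P_{n+1}\to\cdots\to P_0$ of the original resolution of $J$. One small caveat: what you call ``a genuine projective resolution of $\tr\syz^n J$ of length $n+1$'' is not a resolution of finite length (the kernel at $P_0^*$ is $J^*$, which need not be projective); but since you only re-dualize the truncation through homological degree $n+1$, this does not affect the computation of $\Ext^i$ for $i\le n$.

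Your proof of $(1)\Rightarrow(2)$ has a genuine gap in the iterated-pushout step, not just a bookkeeping difficulty. The base case $n=1$ works because the single pushout of $\syz M\hookrightarrow P_0$ against $\syz M\hookrightarrow F^0$ really does mix the two sequences. For $n\ge2$, however, the natural continuation fails. After forming $J_1$ as the pushout of $\syz^n M\hookrightarrow P_{n-1}$ and $\syz^n M\hookrightarrow F^0$, one has $0\to F^0\to J_1\to\syz^{n-1}M\to0$ and $0\to P_{n-1}\to J_1\to Z_1\to0$ with $Z_1=F^0/\syz^n M$, and the two images of $\syz^n M$ inside $J_1$ coincide, so $J_1/\syz^n M\cong\syz^{n-1}M\oplus Z_1$. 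If one now pushes out the two maps $J_1\to P_{n-2}$ and $J_1\to F^1$ (the only maps the construction provides), the result is $(P_{n-2}\oplus F^1)/(\syz^{n-1}M\oplus Z_1)\cong\syz^{n-2}M\oplus Z_2$, a direct sum; iterating, one lands on $M\oplus X$, whose $\Ext^i(-,R)$ contains the unwanted summand $\Ext^i(M,R)$. The underlying problem is structural: the coresolution $\syz^n M\to F^0\to\cdots$ and the projective resolution $\syz^n M\hookrightarrow P_{n-1}\to\cdots$ only agree at their common end $\syz^n M$, and neither complex is of injectives nor comes with a chain map to the other, so there is no canonical way to continue the pushout past degree one. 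The Auslander--Bridger construction of the $n$-hull circumvents this by a different device (it does not iterate pushouts term by term), and the ``care required'' you allude to is precisely the place where your argument breaks. The cosyzygy sequence and the computation $\Ext^i_R(X,R)=0$ for $1\le i\le n$ are correct and are indeed the right ingredients, but a different assembly is needed.
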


Simon \cite{Sim} called the exact sequence appearing in Theorem \ref{abapp}(2) the {\it $n$-approximation} of $M$ and its dual version the {\it $n$-hull} of $M$.
As a consequence of Theorem \ref{abapp}, we see that any module over a Gorenstein local ring has a {\it Cohen--Macaulay approximation} in the sense of Auslander and Buchweitz \cite{ABu}.
It is also proved in \cite{ABu} that over a Gorenstein local ring any module admits a {\it finite projective hull}, which is a dual notion of a Cohen--Macaulay approximation.
Namely, for any module $M$ over a Gorenstein local ring $R$, there exists an exact sequence $0\to M\to W\to C\to 0$ of $R$-modules such that $W$ has finite projective dimension and $C$ is maximal Cohen--Macaulay.
In particular, every module over a Gorenstein local ring can be embedded in a module of finite projective dimension. 
Conversely, Foxby \cite{Fox} proved that if $R$ is a Cohen--Macaulay local ring and every $R$-module can be embedded in an $R$-module of finite projective dimension, then $R$ is Gorenstein.
Takahashi, Yassemi and Yoshino \cite{TYY} succeeded in removing from Foxby's theorem the assumption of Cohen--Macaulayness of the ring $R$.

In the present paper, for a fixed integer $n$, we begin with considering embedding a given module in a module of projective dimension at most $n$.
Namely, our first aim is to explore the following question.

\begin{ques}\label{when}
Let $M$ be an $R$-module and $n$ a nonnegative integer.
When can $M$ be embedded in an $R$-module of projective dimension at most $n$?
\end{ques}

Our answer to Question \ref{when} is the following theorem, which says that the question is closely related to the $(n+1)$-torsionfreeness of $n$th syzygies.

\begin{thm}[Theorem \ref{FPD}]\label{introthm1}
Let $M$ be an $R$-module and $n\ge0$ an integer.
If $\syz^n M$ is $(n+1)$-torsionfree, then $M$ can be embedded in an $R$-module of projective dimension at most $n$.
The converse is also true if $M_{\p}$ has finite Gorenstein dimension over $R_{\p}$ for all prime ideals $\p$ of $R$ with $\depth R_{\p}<n$.
 \end{thm}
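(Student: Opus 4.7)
The plan for the forward direction is to apply Theorem~\ref{abapp} to reduce to a module $J$ with Ext vanishing, convert $(n+1)$-torsionfreeness of $\syz^n M$ into torsionlessness of $J$ via dimension-shifting, and then transport the embedding back to $M$. Since $(n+1)$-torsionfreeness implies $n$-torsionfreeness, Theorem~\ref{abapp} supplies a sequence $0\to B\to J\to M\to 0$ with $\pd B\le n-1$ and $\Ext^i_R(J,R)=0$ for $1\le i\le n$. The horseshoe lemma together with $\pd B\le n-1$ stably identifies $\syz^n J$ with $\syz^n M$, so $\syz^n J$ is also $(n+1)$-torsionfree. For each $0\le k\le n-1$, using the vanishing $\Ext^1_R(\syz^k J,R)=\Ext^{k+1}_R(J,R)=0$, the dualized projective resolution yields a short exact sequence
\[
0\to \tr\syz^k J\to Q_k\to \tr\syz^{k+1}J\to 0
\]
with $Q_k$ projective, giving $\Ext^i(\tr\syz^{k+1}J,R)\cong\Ext^{i-1}(\tr\syz^k J,R)$ for $i\ge 2$. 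Iterating, $\Ext^{n+1}(\tr\syz^n J,R)\cong\Ext^1(\tr J,R)$; the left side is zero by hypothesis and the right equals $\ker(J\to J^{**})$, so $J$ is torsionless and embeds in a finitely generated free module $F$. Reducing modulo $B\subseteq J\subseteq F$ produces $M=J/B\hookrightarrow F/B$ with $\pd_R(F/B)\le\pd B+1\le n$.

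For the converse, starting from an embedding $0\to M\to W\to C\to 0$ with $\pd W\le n$, the horseshoe lemma gives an exact sequence $0\to \syz^n M\to \syz^n W\oplus(\text{projective})\to \syz^n C\to 0$; since $\syz^n W$ is projective, $\syz^n M$ is stably an $(n+1)$-syzygy of $C$. To upgrade this $(n+1)$-syzygy property to $(n+1)$-torsionfreeness, I would verify $\Ext^i(\tr\syz^n M,R)=0$ for $1\le i\le n+1$ prime-by-prime: at primes $\p$ with $\depth R_\p\ge n+1$, the syzygy condition plus a depth/grade computation suffices, while at primes $\p$ with $\depth R_\p<n$ the finite $\Gdim$ hypothesis on $M_\p$ permits Gorenstein-style duality (via the Auslander--Bridger formula) to supply the remaining Ext vanishing.

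The main obstacle I anticipate is the converse direction, specifically pinning down the precise local criterion that separates $(n+1)$-syzygy from $(n+1)$-torsionfreeness and verifying that the finite Gorenstein dimension hypothesis exactly covers the primes where the upgrade is nontrivial. The forward direction, by contrast, is clean once the dimension-shifting isomorphism $\Ext^{n+1}(\tr\syz^n J,R)\cong\Ext^1(\tr J,R)$ is established.
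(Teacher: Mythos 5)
Your forward direction is correct and is essentially a reformulation of the paper's argument. Both start from the Auslander--Bridger approximation $0\to B\to J\to M\to 0$ with $\pd B\le n-1$ and $\Ext^i(J,R)=0$ for $1\le i\le n$. The paper identifies $J\approx\tr\syz^n\tr\syz^n M$, reads off torsionlessness of $J$ from $\Ext^1(\tr J,R)\cong\Ext^{n+1}(\tr\syz^n M,R)=0$, embeds $J$ in a projective $P$, and forms the pushout to get $M\hookrightarrow P/B$. You instead deduce $\syz^n J\approx\syz^n M$ by the horseshoe lemma, dimension-shift through the short exact sequences $0\to\tr\syz^k J\to Q_k\to\tr\syz^{k+1}J\to 0$ (valid because $\Ext^{k+1}(J,R)=0$) to get $\Ext^{n+1}(\tr\syz^n J,R)\cong\Ext^1(\tr J,R)$, conclude $J$ is torsionless, and pass to $M=J/B\hookrightarrow F/B$; since $F/B$ is precisely the pushout, this is the same construction, just derived with a slightly more elementary bookkeeping that avoids the $\tr\tr$ identification. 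This part of the argument is sound.

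The converse, however, is not completed, as you acknowledge. You correctly reduce to showing that $\syz^n M$, once known to be an $(n+1)$th syzygy, is in fact $(n+1)$-torsionfree under the hypothesis $\Gdim_{R_\p}M_\p<\infty$ for $\depth R_\p<n$; but that implication is exactly the nontrivial content, and your prime-by-prime plan is a sketch rather than a proof. In particular, it does not say what happens at primes with $\depth R_\p=n$, nor does it explain how finite Gorenstein dimension at low-depth primes yields the required vanishing of $\Ext^i(\tr\syz^n M,R)$ for $1\le i\le n+1$. The paper closes this gap by invoking Masek's theorem \cite[Theorem 43]{Masek}, which says (roughly) that an $m$th syzygy which locally has finite Gorenstein dimension at the relevant low-depth primes is $m$-torsionfree; applied with $m=n+1$ to $\syz^n M$ this gives the converse at once. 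Without citing or reproving a result of this kind, your argument for the converse does not go through.
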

As an application of the above theorem, we recover the result of Takahashi, Yassemi and Yoshino in Corollary \ref{TYYcor}.

Next, let us consider the case where $R$ is local with residue field $k$, and has depth $t$.
Recently, Dey and Takahashi \cite{DT} studied the torsionfreeness of syzygies of $k$.
They especially proved in \cite[Theorems 4.1(2) and 4.5(1)]{DT} that $\syz^t k$ is $(t+1)$-torsionfree, and it is a $(t+2)$nd syzygy if and only if the local ring $R$ has type one.
Motivated by their results, as another application of Theorem \ref{introthm1}, we consider the length of the $R$-module $\Ext^i_R(\tr\syz^j k, M)$ for an $R$-module $M$ and positive integers $i, j$, where $\tr(-)$ is the (Auslander) transpose.
We prove that the module $\Ext^i_R(\tr\syz^t k, M)$ is a $k$-vector space for all integers $i>0$ if $M$ has depth at least $t$, and compute its dimension as a $k$-vector space.
By letting $M=R$ in our computations, we obtain the following corollary.


\begin{cor}[Corollary \ref{DTtype}]\label{introDTtype}
Let $R$ be a local ring with residue field $k$ and have depth $t$.
Then the following hold.
\begin{enumerate}[\rm(1)]
\item
   The syzygy $\syz^t k$ is $(t+2)$-torsionfree if and only if the local ring $R$ has type one.
   \item
  One has $\Ext^i_R(\tr\syz^t k,R)=0$ for some $i\ge t+3$ if and only if the ring $R$ is Gorenstein.
\end{enumerate}
\end{cor}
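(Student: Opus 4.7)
The plan is to specialize the preceding general computations of $\Ext^i_R(\tr\syz^t k, M)$ (for $M$ of depth at least $t$) to the case $M = R$ and then read off both characterizations. The key structural identification, coming from the four-term exact sequence $0\to N^*\to P_0^*\to P_1^*\to \tr N\to 0$ associated with any presentation $P_1\to P_0\to N\to 0$, is the isomorphism of syzygies $\syz^{i+2}(\tr N)\cong\syz^i(N^*)$, which yields
\[
\Ext^{i+2}_R(\tr N, R)\cong\Ext^i_R(N^*, R)\qquad(i\ge 1).
\]
With $N=\syz^t k$, both parts reduce to vanishing statements for $\Ext^j_R((\syz^t k)^*, R)$ in degrees $j\ge t$.

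For part (1), the theorem of Dey--Takahashi \cite{DT} tells us that $\syz^t k$ is already $(t+1)$-torsionfree, so the only additional condition for $(t+2)$-torsionfreeness is $\Ext^{t+2}_R(\tr\syz^t k, R)=\Ext^t_R((\syz^t k)^*, R)=0$. I would study the short exact sequence
\[
0\to A\to (\syz^t k)^*\to\Ext^t_R(k, R)\to 0,
\]
with $A=\image(P_{t-1}^*\to P_t^*)$ read off from the dualized minimal free resolution of $k$, and use the vanishing $\Ext^{<t}_R(k, R)=0$ (forced by $\depth R=t$) to check inductively that $\Ext^j_R(A, R)=0$ for every $j\ge 2$. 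The long exact sequence then realizes $\Ext^t_R((\syz^t k)^*, R)$ as the cokernel of a map into $\Ext^t_R(\Ext^t_R(k, R), R)\cong k^{r^2}$, where $r:=\dim_k\Ext^t_R(k, R)$ is the type; the explicit dimension formula supplied by the paper's preceding computation should show this cokernel vanishes precisely when $r=1$.

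For part (2), the forward implication is classical: if $R$ is Gorenstein, then $\syz^t k$ is totally reflexive, hence so is $\tr\syz^t k$, and $\Ext^i_R(\tr\syz^t k, R)=0$ for every $i\ge 1$. For the converse, assume $\Ext^i_R(\tr\syz^t k, R)=0$ for some $i\ge t+3$; the identification above gives $\Ext^{i-2}_R((\syz^t k)^*, R)=0$ with $i-2\ge t+1$. Unraveling this through the same short exact sequence translates it into the vanishing of some Bass number $\mu^j(R):=\dim_k\Ext^j_R(k, R)$ with $j>\depth R$. The Peskine--Szpiro and Roberts theorem (a consequence of the New Intersection Theorem) asserts that $\mu^j(R)>0$ for all $\depth R\le j\le\id_R R$ when $R$ is not Gorenstein, so this vanishing forces $R$ to be Gorenstein. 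The main obstacle is the bookkeeping in part (1): one has to pin down the image of the connecting map $\Ext^{t-1}_R(A, R)\to\Ext^t_R(\Ext^t_R(k, R), R)$ in the long exact sequence, as it is the size of this image that determines exactly when the cokernel vanishes; once the explicit dimension formula is in hand, both characterizations follow at once.
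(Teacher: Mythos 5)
Your top-level plan — specialize the computation of $\Ext^i_R(\tr\syz^t k,M)$ at $M=R$ and read off the criteria via Roberts' theorem — is exactly the paper's strategy, and the identification $\syz^2\tr N\approx N^*$, hence $\Ext^{i+2}_R(\tr N,R)\cong\Ext^i_R(N^*,R)$ for $i\ge1$, is correct. Your treatment of part (2) is sound. However, the concrete route you sketch for part (1) contains a genuine error: the claim that $\Ext^j_R(A,R)=0$ for all $j\ge2$, where $A=\operatorname{Im}(P_{t-1}^*\to P_t^*)$, is false. From the exact sequence $0\to P_0^*\to\cdots\to P_{t-1}^*\to A\to0$ (which uses $\Ext^{<t}_R(k,R)=0$), dualizing back gives $\Ext^j_R(A,R)=0$ for $1\le j\le t-2$ and for $j\ge t$, but $\Ext^{t-1}_R(A,R)\cong k$. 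If your vanishing claim were true, the long exact sequence of $0\to A\to(\syz^t k)^*\to\Ext^t_R(k,R)\to0$ would force $\Ext^t_R\bigl((\syz^t k)^*,R\bigr)\cong\Ext^t_R(\Ext^t_R(k,R),R)\cong k^{r^2}$, with no cokernel to take, contradicting the paper's formula $\dim_k\Ext^{t+2}_R(\tr\syz^t k,R)=r^2-1$ (Corollary \ref{M=R}(2)). The correction term of $-1$ comes precisely from $\Ext^{t-1}_R(A,R)\cong k$ mapping injectively into $k^{r^2}$. You flag this "bookkeeping" as the main obstacle, but your stated vanishing of $\Ext^{\ge2}(A,R)$ erases the very connecting map whose image you then say needs pinning down.

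The gap can be filled without invoking the paper's Theorem \ref{t+1}: since $\syz^t k$ is $(t+1)$-torsionfree by Dey--Takahashi, $\Ext^{t+1}_R(\tr\syz^t k,R)=0$, and via your identification this is $\Ext^{t-1}_R((\syz^t k)^*,R)=0$; plugging this into the long exact sequence forces the connecting map $k=\Ext^{t-1}_R(A,R)\to\Ext^t_R(\Ext^t_R(k,R),R)\cong k^{r^2}$ to be injective, giving $\dim_k\Ext^t_R((\syz^t k)^*,R)=r^2-1$. This yields a genuinely different (and arguably more elementary) route than the paper's, which constructs the hull $0\to k\to W\to C\to0$ with $W\approx\tr\syz^{t-1}(k^{\oplus r})$ (Corollary \ref{kapp}) and passes through Lemmas \ref{exttor} and \ref{connec}. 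Both approaches work; yours avoids the pushout machinery but, as written, needs the correction above to give the right answer.
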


If $\syz^t k$ is $(t+2)$-torsionfree, then in particular it is a $(t+2)$nd syzygy, so $R$ has type one by the result of Dey and Takahashi mentioned above.
Corollary \ref{introDTtype}(1) asserts that the converse also holds.
In other words, the equivalent conditions given by Dey and Takahashi are also equivalent to the $(t+2)$-torsionfreeness of $\syz^t k$.
Furthermore, it follows from Corollary \ref{introDTtype}(2) that if $\syz^t k$ is $(t+3)$-torsionfree, then $R$ is Gorenstein.

The organization of this paper is as follows.
In Section 2, we consider when a given module can be embedded in a module of finite projective dimension.
We prove in this section the above Theorem \ref{introthm1}, and state some corollaries and examples.
In Section 3, when $R$ is local, we prove that some Ext modules related to the residue field of $R$ are vector spaces over the residue field, and compute their dimensions as an application of a result obtained in the previous section, which produces the above Corollary \ref{introDTtype}.
\section{Modules embedded in modules of finite projective dimension}
In this section, we consider that when a given $R$-module can be embedded in an $R$-module of finite projective dimension.
The main result of this section gives an answer to this question and is used to compute the lengths of Ext modules in the next section.

First, we introduce some basic notations and notions.
\begin{nota}
\begin{enumerate}[\rm(1)]
  \item
Let $M$ and $N$ be $R$-modules.
By $M\approx N$ we mean that there are projective modules $P$ and $Q$ such that $M\oplus P\cong N\oplus Q$.
   \item
We denote by $(-)^\ast$ the $R$-dual functor $\Hom_R(-,R)$. 
\end{enumerate}
\end{nota}

The following notions were introduced in \cite{AB} and play an important role in this paper.

\begin{dfn}
\begin{enumerate}[\rm(1)]
   \item
Let $M$ be an $R$-module and $P_1\xrightarrow{\partial} P_0\to M\to0$ a projective presentation of $M$.
The {\em (first) syzygy $\syz M$} and {\em (Auslander) transpose $\tr M$} of $M$ are defined as $\image\partial$ and $\Cok \partial^\ast$, respectively.
The modules $\syz M$ and $\tr M$ are uniquely determined by $M$ up to projective summands.
For any integer $n>0$ the {\em $n$th syzygy $\syz^n M$ of $M$} is defined inductively by $\syz^n M=\syz(\syz^{n-1}M)$.
   \item \cite[Definition 2.15]{AB}
Let $n\ge0$ be an integer.
An $R$-module $M$ is said to be {\em $n$-torsionfree} if $\Ext^i_R(\tr M,R)=0$ for all integers $1\le i\le n$.
An $R$-module $M$ is called {\em torsionless} if it is $1$-torsionfree and called {\em reflexive} if it is $2$-torsionfree.
   \item \cite[Definition 2.35]{AB}
Let $n>0$ be an integer.
An $R$-module $M$ is said to be {\em $n$-spherical} if $\Ext^i_R(M,R)=0$ for all integers $1\le i\le n-1$ and $M$ has projective dimension at most $n$.
   \item \cite[Definition 3.7]{AB}
An $R$-module $M$ is said to be {\em Gorenstein projective} if $\Ext^i_R(M,R)=0$ and $\Ext^i_R(\tr M,R)=0$ for all integers $i>0$.
   The {\em Gorenstein dimension} of an $R$-module $M$ is defined to be the infimum of integers $n$ such that there exists an exact sequence $0\to G_n\to G_{n-1}\to\cdots\to G_1\to G_0\to M\to0$ of $R$-modules with $G_i$ Gorenstein projective, and denoted by $\Gdim_R M$.
\end{enumerate}
\end{dfn}

The following theorem is the main result of this section.
This theorem describes the relationship between Question \ref{when} and various conditions including the $(n+1)$-torsionfreeness of $n$th syzygies.


\begin{thm}\label{FPD}
Let $M$ be an $R$-module and $n$ a nonnegative integer.
Consider the following conditions.
\begin{enumerate}[\rm(1)]
    \item
    The module $\syz^n M$ is $(n+1)$-torsionfree.
    \item
    There exists an exact sequence $0\to M\to W\to C\to 0$ of $R$-modules such that $W$ has projective dimension at most $n$ and $C\approx \tr\syz^{n+1}\tr\syz^n M$.
    \item
    There exists an exact sequence $0\to M\to W\to C\to 0$ of $R$-modules such that $W$ has projective dimension at most $n$ and $\Ext^i(C,R)=0$ for all $1\le i\le n+1$.
    \item
    The module $M$ can be embedded in an $R$-module of projective dimension at most $n$.
    \item
    The module $\syz^n M$ is an $(n+1)$th syzygy.
\end{enumerate}
Then the implications $(1)\Longleftrightarrow(2)\Longleftrightarrow(3)\Longrightarrow(4)\Longrightarrow(5)$ hold.
If $\Gdim_{R_{\p}}M_{\p}<\infty$ for all prime ideals $\p$ of $R$ with $\depth R_{\p}<n$, then all the five conditions are equivalent.
\end{thm}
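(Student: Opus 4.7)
The plan is to set up a small cycle $(3) \Rightarrow (1) \Rightarrow (2) \Rightarrow (3)$ for the equivalences, handle the trivial $(3) \Rightarrow (4) \Rightarrow (5)$, and finish with the delicate $(5) \Rightarrow (1)$ under the Gorenstein-dimension hypothesis.

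A single syzygy computation drives both $(3) \Rightarrow (1)$ and $(4) \Rightarrow (5)$: whenever $0 \to M \to W \to C \to 0$ is exact with $\pd W \le n$, the horseshoe lemma applied to projective resolutions yields a short exact sequence $0 \to \syz^n M \to P \to \syz^n C \to 0$ with $P$ projective (because $\syz^n W$ is projective), hence $\syz^n M \approx \syz^{n+1} C$. For $(3) \Rightarrow (1)$, combining this with the Auslander--Bridger characterization---$\syz^k X$ is $k$-torsionfree whenever $\Ext^i(X,R)=0$ for $1 \le i \le k$---immediately yields that $\syz^n M$ is $(n+1)$-torsionfree. For $(4) \Rightarrow (5)$, the same computation applied to $0 \to M \to W \to W/M \to 0$ shows $\syz^n M$ is a first syzygy of $\syz^n(W/M)$, hence an $(n+1)$th syzygy; and $(3) \Rightarrow (4)$ is trivial.

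For $(1) \Rightarrow (2)$, set $N = \syz^n M$ and $C = \tr \syz^{n+1} \tr N$. The crucial first step is to verify $\syz^{n+1} C \approx N$ under hypothesis (1). Take a projective resolution $\cdots \to Q_1 \to Q_0 \to \tr N \to 0$, so that $C$ has presentation $Q_{n+1}^\ast \to Q_{n+2}^\ast \to C \to 0$. Hypothesis (1)---equivalent to $\Ext^i(\tr N, R) = 0$ for $1 \le i \le n+1$---guarantees exactness of the dualized complex $0 \to (\tr N)^\ast \to Q_0^\ast \to Q_1^\ast \to \cdots \to Q_{n+1}^\ast$ through position $n+1$, and iteratively peeling off syzygies of $C$ identifies $\syz^{n+1} C$ stably with $\image(Q_1^\ast \to Q_2^\ast) \approx Q_1^\ast/\syz N \approx \tr \tr N \approx N$. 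With the stable identification $\syz^n M \approx \syz^{n+1} C$ in hand, construct $W$ by gluing projective resolutions at level $n$: after adjusting by projective summands so that $\syz^n M$ and $\syz^{n+1} C$ coincide inside $F_{n-1}$ and $P_n$, the module $W$ with projective resolution $0 \to P_n \to F_{n-1} \oplus P_{n-1} \to \cdots \to F_0 \oplus P_0 \to W \to 0$ (with differentials combining those of $F_\bullet \to M$ and $P_\bullet \to C$ through the identification at level $n$) has $\pd W \le n$ and fits into $0 \to M \to W \to C \to 0$. For $(2) \Rightarrow (3)$, taking $n$th syzygies of the given sequence produces the stable identification $\syz^n M \approx \syz^{n+1}\tr\syz^{n+1}\tr\syz^n M$; an Auslander--Bridger-type duality argument (whose simplest case, for $n+1=2$, is $N$ reflexive $\Leftrightarrow N \cong N^{\ast\ast}$ via the stable formula $\syz^2 \tr X \approx X^\ast$) then forces $\syz^n M$ to be $(n+1)$-torsionfree, establishing (1); the Ext-vanishing of $C$ required for (3) follows from Auslander--Bridger applied to $\syz^{n+1}\tr N$.

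The main obstacle is $(5) \Rightarrow (1)$ under the Gorenstein-dimension hypothesis. I would proceed by localization, showing $\syz^n M_\p$ is $(n+1)$-torsionfree over $R_\p$ for each prime $\p$. For primes with $\depth R_\p \ge n+1$, the localized $(n+1)$-syzygy property and the depth-lemma bound $\depth (\syz^n M)_\q \ge \min(n+1, \depth R_\q)$ combine through grade/depth considerations to force the necessary Ext vanishings. For primes with $\depth R_\p < n$, the hypothesis $\Gdim_{R_\p} M_\p < \infty$ propagates to $\syz^n M_\p$, and the Auslander--Bridger characterization of $k$-torsionfreeness for modules of finite Gorenstein dimension, formulated as a local depth condition, lets us deduce $(n+1)$-torsionfreeness from the $(n+1)$-syzygy property. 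The delicate point is ensuring the two cases cover all primes uniformly and that the local finiteness of Gorenstein dimension steps in exactly where the pure depth argument would fail.
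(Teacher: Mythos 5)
Your cycle $(3)\Rightarrow(1)\Rightarrow(2)\Rightarrow(3)$, together with the trivial $(3)\Rightarrow(4)$, the horseshoe argument for $(3)\Rightarrow(1)$ and $(4)\Rightarrow(5)$, and the Masek-style localization for $(5)\Rightarrow(1)$, all match the structure of the paper's proof (the paper simply cites Ma\c{s}ek for the last step). The real problem is the implication $(1)\Rightarrow(2)$, where your argument has a genuine gap.

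You correctly verify that if $C=\tr\syz^{n+1}\tr\syz^n M$ and $\syz^n M$ is $(n+1)$-torsionfree, then $\syz^{n+1}C\approx\syz^n M$ stably. But you then claim one can ``glue'' projective resolutions $F_\bullet\to M$ and $P_\bullet\to C$ at level $n$ to produce a module $W$ with resolution $0\to P_n\to F_{n-1}\oplus P_{n-1}\to\cdots\to F_0\oplus P_0\to W\to 0$ fitting into $0\to M\to W\to C\to 0$. For this complex to exist one needs, at the top, a differential $P_n\to F_{n-1}\oplus P_{n-1}$ whose $F_{n-1}$-component restricts on $N:=\syz^{n+1}C\subseteq P_n$ to (something close to) the inclusion $N\hookrightarrow F_{n-1}$. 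But $N$ is a \emph{submodule} of $P_n$, not a quotient, so the inclusion $N\hookrightarrow F_{n-1}$ does not extend along $N\hookrightarrow P_n$ to a map $P_n\to F_{n-1}$ without an extra argument, and even if one extends it arbitrarily, the exactness of the glued complex at each spot (and the identification of its cokernel with a module fitting into $0\to M\to W\to C\to 0$) is not established. A stable isomorphism $\syz^n M\approx\syz^{n+1}C$ is necessary for the existence of such a sequence, but your write-up does not show it is sufficient, and the ``after adjusting by projective summands'' step hides exactly the nontrivial point. The paper avoids all this by invoking the Auslander--Bridger approximation theorem (Theorem~\ref{abapp}): $(n+1)$-torsionfreeness of $\syz^n M$ gives $0\to B\to J\to M\to 0$ with $\pd B\le n-1$ and $J$ torsionless, then the embedding $J\hookrightarrow P$ (projective) and a pushout produce $W$ with $\pd W\le n$; this is the step your proposal tries to bypass but in fact needs.

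A secondary concern is your $(2)\Rightarrow(3)$ step. You assert that $(2)$ yields $\syz^n M\approx\syz^{n+1}\tr\syz^{n+1}\tr\syz^n M$ and that an ``Auslander--Bridger-type duality argument'' then forces $(n+1)$-torsionfreeness. The underlying fact you would need is that $\Ext^i_R(\tr\syz^{n+1}\tr\syz^n M,R)=0$ for $1\le i\le n+1$, but that vanishing is not automatic for a general module in the slot $\tr\syz^n M$; only $\Ext^1$ vanishes for free. The paper handles $(2)\Rightarrow(3)$ by citing a precise result of Iyama \cite[Proposition~1.1.1]{I}, which is tailored to exactly this cosyzygy-of-transpose situation, rather than by a generic duality argument. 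As written, your step is not justified.
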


\begin{proof}
Assume that $\syz^n M$ is $(n+1)$-torsionfree.
In particular, since $\syz^n M$ is $n$-torsionfree, there exists an exact sequence $0\to B\to \J_n(M)\to M\to 0$ such that $B$ has projective dimension at most $n-1$ and $\J_n(M)\approx\tr\syz^n\tr\syz^n M$ by \cite[Proposition 2.21]{AB}.
As $\J_n(M)$ is torsionless by the assumption, we have an exact sequence $0\to\J_n(M)\to P\to\tr\syz\tr\J_n(M)\to0$ with $P$ projective.
Put $C=\tr\syz\tr\J_n(M)$.
We construct the pushout diagram:
$$
\xymatrix@R-1pc@C-1pc{
&&0\ar[d]&0\ar[d]&\\
0\ar[r]&B\ar@{=}[d]\ar[r]&\J_n(M)\ar[d]\ar[r]&M\ar[r]\ar[d]&0\\
0\ar[r]&B\ar[r]&P\ar[d]\ar[r]&W\ar[d]\ar[r]&0\\
&&C\ar[d]\ar@{=}[r]&C\ar[d]\\
&&0&0.
}
$$
Note that $W$ has projective dimension at most $n$ and
$$
C\cong\tr\syz\tr\J_n(M)\cong\tr\syz\tr\tr\syz^n\tr\syz^n M\approx\tr\syz^{n+1}\tr\syz^n M.
$$
Thus the implication $(1)\Rightarrow(2)$ follows.
The implication $(2)\Rightarrow(3)$ follows from \cite[Proposition 1.1.1]{I}.
Suppose that the condition (3) holds.
Then there exists an exact sequence $0\to M\to W\to C\to 0$ such that $W$ has projective dimension at most $n$ and $\Ext^i(C,R)=0$ for all integers $1\le i\le n+1$.
Applying the horseshoe lemma to this repeatedly, we get an exact sequence $0\to M^{\prime}\to W^{\prime}\to C^{\prime}\to0$ such that $M^{\prime}\approx\syz^n M$, $W^{\prime}\approx\syz^n W$ and $C^{\prime}\approx\syz^n C$.
As $W^{\prime}$ is projective, we have $\syz^n M\approx\syz^{n+1}C$.
By \cite[Proposition 1.1.1]{I}, $\syz^n M$ is $(n+1)$-torsionfree and thus the implication $(3)\Rightarrow(1)$ holds.
The implication $(3)\Rightarrow(4)$ is clear.
The implication $(4)\Rightarrow(5)$ follows from taking $\syz^n$ as in the proof of the implication $(3)\Rightarrow(1)$.
Under the assumption that $\Gdim_{R_{\p}}M_{\p}<\infty$ for all prime ideals $\p$ of $R$ with $\depth R_{\p}<n$, if $\syz^n M$ is an $(n+1)$th syzygy, then by \cite[Theorem 43]{Masek} $\syz^n M$ is $(n+1)$-torsionfree.
\end{proof}

Simon \cite{Sim} called the exact sequence appearing in Theorem \ref{FPD}(3) the $(n+1)$-hull of $M$.
The existence of $n$-hulls is also discussed in \cite[Theorem 2.18]{Sim}.
The minimality of $n$-approximations and $n$-hulls is studied in \cite[Section 3]{Sim}.

We can deduce a result of Takahashi, Yassemi and Yoshino \cite[Corollary 2.4]{TYY} directly from Theorem \ref{FPD}.

\begin{cor}[Takahashi--Yassemi--Yoshino]\label{TYYcor}
Suppose that $R$ is local and with depth $t$.
Let $k$ be the residue field of $R$.
Then the following are equivalent.
\begin{enumerate}[\rm(1)]
\item
The ring $R$ is Gorenstein.
\item
Any $R$-module can be embedded in an $R$-module of finite projective dimension.
\item
The module $\tr\syz^t k$ can be embedded in an $R$-module of finite projective dimension.
\end{enumerate}
\end{cor}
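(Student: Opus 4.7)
The plan is to treat the three implications of the corollary separately, using Theorem~\ref{FPD} as the main tool.

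First, I would prove $(1)\Rightarrow(2)$ by applying Theorem~\ref{FPD} to an arbitrary $R$-module $M$ with $n=t$. Over the Gorenstein local ring $R$ every module has finite Gorenstein dimension, so the hypothesis $\Gdim_{R_\p}M_\p<\infty$ is automatic. Since $R$ is Gorenstein and hence Cohen--Macaulay, for $n\ge t$ the syzygy $\syz^n M$ is maximal Cohen--Macaulay and therefore Gorenstein projective, so $\syz^n M$ is $(n+1)$-torsionfree. Condition $(1)$ of Theorem~\ref{FPD} thus holds, yielding condition $(4)$: $M$ embeds in a module of projective dimension at most $t$. The implication $(2)\Rightarrow(3)$ is immediate by specialization to $M=\tr\syz^t k$.

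For $(3)\Rightarrow(1)$, I would again apply Theorem~\ref{FPD}, this time to $M=\tr\syz^t k$ with $n=t$. Two observations make the hypotheses of Theorem~\ref{FPD} available: by the Auslander--Buchsbaum formula any module of finite projective dimension over $R$ has projective dimension at most $t$, so the embedding hypothesis provides condition $(4)$ of Theorem~\ref{FPD} with $n=t$; and the Gorenstein dimension hypothesis is automatic, since for every $\p\ne\m$ the localization $(\syz^t k)_\p$ is $R_\p$-free (the localized minimal free resolution of $k$ is an acyclic complex of free modules), making $(\tr\syz^t k)_\p=0$ and $\Gdim_{R_\p}0=-\infty$, while for $\p=\m$ we have $\depth R_\m=t$, which is not strictly less than $n=t$. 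Consequently all five conditions of Theorem~\ref{FPD} are equivalent, and in particular condition $(3)$ provides an exact sequence $0\to\tr\syz^t k\to W\to C\to 0$ with $\pd_R W\le t$ and $\Ext^i_R(C,R)=0$ for $1\le i\le t+1$.

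The final step is to deduce that $R$ is Gorenstein from this extra Ext vanishing. Applying $\Hom_R(-,R)$ to the above sequence and using that $\Ext^{>t}_R(W,R)=0$, I would extract the vanishing of a specific higher Ext module of $\tr\syz^t k$. Combining this with the Auslander--Bridger identity $\Ext^{i+2}_R(\tr L,R)\cong\Ext^i_R(L^\ast,R)$ for $i\ge 1$ (coming from $\syz^2\tr L\approx L^\ast$) and the four-term exact sequence $0\to(\syz^{t-1}k)^\ast\to F_{t-1}^\ast\to(\syz^t k)^\ast\to\Ext^t_R(k,R)\to 0$ (obtained by dualizing $0\to\syz^t k\to F_{t-1}\to\syz^{t-1}k\to 0$), I would thread the vanishing back to a Bass number $\mu^i(R)=\dim_k\Ext^i_R(k,R)=0$ for some $i>t$. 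The conclusion that $R$ is Gorenstein then follows from Bass's no-gap theorem for Bass numbers. The main obstacle will be precisely this final Ext chase: carefully threading the vanishing furnished by Theorem~\ref{FPD} through the Auslander--Bridger identities and the structure of $(\syz^t k)^\ast$ so as to land on a Bass number of $R$ past the depth.
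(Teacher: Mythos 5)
Your handling of $(1)\Rightarrow(2)$ and $(2)\Rightarrow(3)$ matches the paper exactly, and your explicit verification that the $\Gdim$ hypothesis of Theorem~\ref{FPD} holds for $M=\tr\syz^t k$ (localization at $\p\ne\m$ makes $(\syz^t k)_\p$ free, hence $(\tr\syz^t k)_\p$ projective; and $\p=\m$ is excluded since $\depth R_\m=t=n$) is a nice touch that the paper leaves tacit.

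The gap is in $(3)\Rightarrow(1)$, and you flag it yourself: the ``final Ext chase'' is not carried out, and as sketched it does not land where you need it. From the hull $0\to\tr\syz^t k\to W\to C\to 0$ of Theorem~\ref{FPD}(3), dualizing and using $\Ext^{>t}_R(W,R)=0$ and $\Ext^{\le t+1}_R(C,R)=0$ only yields isomorphisms $\Ext^{t+1+j}_R(\tr\syz^t k,R)\cong\Ext^{t+2+j}_R(C,R)$ for $j\ge0$ --- no vanishing, since nothing controls $\Ext^{t+2}_R(C,R)$. Likewise the identity $\Ext^{i+2}_R(\tr L,R)\cong\Ext^i_R(L^\ast,R)$ applied to $L=\syz^t k$ translates the known vanishing $\Ext^{\le t+1}_R(\tr\syz^t k,R)=0$ into $\Ext^{\le t-1}_R((\syz^t k)^\ast,R)=0$, and chasing this through the four-term sequence $0\to(\syz^{t-1}k)^\ast\to F_{t-1}^\ast\to(\syz^t k)^\ast\to\Ext^t_R(k,R)\to0$ does not reach $\Ext^{t+1}_R(k,R)$.

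The missing ingredient is the Auslander--Bridger \emph{approximation} of $k$ itself, namely the exact sequence $0\to B\to\tr\syz^t\tr\syz^t k\to k\to0$ with $\pd_R B\le t-1$ furnished by Theorem~\ref{abapp} (valid since $\syz^t k$ is $t$-torsionfree). Condition~(1) of Theorem~\ref{FPD} says $\syz^t\tr\syz^t k$ is $(t+1)$-torsionfree, i.e.\ $\Ext^{t+1}_R(\tr\syz^t\tr\syz^t k,R)=0$; dualizing the approximation sequence and using $\Ext^{\ge t}_R(B,R)=0$ gives $\Ext^{t+1}_R(k,R)\cong\Ext^{t+1}_R(\tr\syz^t\tr\syz^t k,R)=0$, and Roberts' theorem finishes. (Alternatively, note that the long exact sequence from the hull forces $\Ext^i_R(W,R)\cong\Ext^i_R(\tr\syz^t k,R)=0$ for $1\le i\le t$; combined with $\pd_R W\le t$ and minimality, $W$ must be free, so $\tr\syz^t k$ is torsionless, whence $\Ext^1_R(\syz^t k,R)\cong\Ext^1_R(\tr\tr\syz^t k,R)=0$.) Either way, the route you sketch needs to be replaced by one of these arguments.
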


\begin{proof}
Assume that $R$ is Gorenstein.
Then for any $R$-module $M$ the $t$th syzygy $\syz^t M$ is maximal Cohen--Macaulay.
By \cite[Theorem 3.3.10]{BH}, $\syz^t M$ is $(t+1)$-torsionfree and the implication $(1)\Rightarrow(2)$ follows from Theorem \ref{FPD}.
The implication $(2)\Rightarrow(3)$ is clear.
Suppose that $\tr\syz^t k$ is a submodule of an $R$-module of finite projective dimension.
It follows from Theorem \ref{FPD} that $\syz^t\tr\syz^t k$ is $(t+1)$-torsionfree.
In particular, $\Ext^{1}(\syz^t\tr\syz^t\tr\syz^t k,R)=\Ext^{t+1}(\tr\syz^t\tr\syz^t k,R)=0$.
Since $\Ext^1(\syz^t k,R)$ is a direct summand of $\Ext^{1}(\syz^t\tr\syz^t\tr\syz^t k,R)$, we have $\Ext^{t+1}(k,R)=\Ext^1(\syz^t k,R)=0$ and the implication $(3)\Rightarrow(1)$ follows from \cite[II. Theorem 2]{Rob}.
\end{proof}

The following corollary is necessary to prove Theorem \ref{t+1}, which is one of the main theorems in this paper.
For a local ring $(R, \m, k)$ we denote by $\r(R)$ the {\em type} of $R$, that is, $\r(R)$ is the dimension of the vector space $\Ext_R^{\depth R}(k,R)$ over the residue field $k$ of $R$.

\begin{cor}\label{kapp}
Suppose that $R$ is local and with depth $t$.
Let $k$ be the residue field of $R$.
Then there exists an exact sequence $0\to k\to W\to C\to0$ such that $W$ has projective dimension $t$ and $C\approx\tr\syz^{t+1}\tr\syz^t k$.
Moreover, if $t>0$, then $W\approx \tr\syz^{t-1}(k^{\oplus\r(R)})$.
\end{cor}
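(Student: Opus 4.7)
The plan is to apply Theorem \ref{FPD} to $(M,n)=(k,t)$, confirm $\pd W = t$ by depth considerations, and finally identify the stable class of $W$ by tracing the pushout construction against the dualized minimal free resolution of $k$.

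First, by the Dey--Takahashi result \cite[Theorem 4.1(2)]{DT} cited in the introduction, $\syz^t k$ is $(t+1)$-torsionfree, which is exactly condition $(1)$ of Theorem \ref{FPD}. The implication $(1)\Rightarrow(2)$ then delivers an exact sequence $0\to k\to W\to C\to 0$ with $\pd W\le t$ and $C\approx \tr\syz^{t+1}\tr\syz^t k$. The inclusion $k\hookrightarrow W$ forces $\m\in\ass_R W$, so $\depth W = 0$; together with $\pd W<\infty$, the Auslander--Buchsbaum formula gives $\pd W = \depth R - \depth W = t$.

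For the moreover part, assume $t>0$, and let $F_\bullet\to k$ denote the minimal free resolution. The cohomology of the dualized complex $F_\bullet^*$ at $F_i^*$ is $\Ext^i_R(k,R)$, which vanishes for $i<t$ (since $\depth R=t$, so in particular $\Hom_R(k,R)=0$) and equals $k^{\oplus r(R)}$ at $i=t$ by definition of the type. Consequently, the truncated dualized complex $0\to F_0^*\to F_1^*\to\cdots\to F_t^*\to \tr\syz^{t-1}k\to 0$ is an exact free resolution of length $t$, showing $\pd\tr\syz^{t-1}k \le t$, and comparing $\image(F_{t-1}^*\to F_t^*)$ with $\ker(F_t^*\to F_{t+1}^*)$ yields a short exact sequence $0\to k^{\oplus r(R)}\to \tr\syz^{t-1}k\to \syz\tr\syz^t k\to 0$. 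Taking $r(R)$ copies gives $\tr\syz^{t-1}(k^{\oplus r(R)})\cong(\tr\syz^{t-1}k)^{\oplus r(R)}$ of projective dimension exactly $t$ with socle containing $k^{\oplus r(R)^2}$.

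The main obstacle is matching the pushout $W=P/B$ from Theorem \ref{FPD} against the candidate $\tr\syz^{t-1}(k^{\oplus r(R)})$ up to stable equivalence. The plan is to take $P$ to be (stably) $(F_t^*)^{\oplus r(R)}$, the natural free module in which $J_t(k)\approx\tr\syz^t\tr\syz^t k$ embeds via the dualized resolution, and to identify $B\hookrightarrow P$ with the image of $(F_{t-1}^*)^{\oplus r(R)}$ (which is free of $\pd\le t-1$). The resulting quotient $P/B$ is then precisely the module presented by $(F_{t-1}^*)^{\oplus r(R)}\to(F_t^*)^{\oplus r(R)}$, namely $\tr\syz^{t-1}(k^{\oplus r(R)})$. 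The delicate step is showing that the approximation map $J_t(k)\to k$ slots into this diagram so that the diagonal inclusion $B\hookrightarrow J_t(k)\hookrightarrow P$ realizes the above identification, and this is exactly where the input $\Ext^t_R(k,R)=k^{\oplus r(R)}$ is used to account for the multiplicity $r(R)$.
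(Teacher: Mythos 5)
Your first paragraph is fine and matches the paper: $\syz^t k$ is $(t+1)$-torsionfree by Dey--Takahashi, and Theorem~\ref{FPD}(1)$\Rightarrow$(2) gives the exact sequence with $\pd W\le t$ and $C\approx\tr\syz^{t+1}\tr\syz^t k$. Your Auslander--Buchsbaum argument for $\pd W = t$ is a clean way to pin down the equality (the paper gets it slightly differently, via $\Ext^t_R(W,R)\ne 0$), so that part is a legitimate, even slightly more explicit, alternative.

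The ``moreover'' part is where the proof breaks down. You compute correctly that the dualized minimal free resolution of $k$ gives a length-$t$ free resolution of $\tr\syz^{t-1}k$ and a short exact sequence $0\to k^{\oplus\r(R)}\to\tr\syz^{t-1}k\to\syz\tr\syz^t k\to 0$, but this sequence is not the one you need: it has $k^{\oplus\r(R)}$ on the left and a single copy of $\tr\syz^{t-1}k$ in the middle, whereas Corollary~\ref{kapp} asks for a single $k$ on the left and $\tr\syz^{t-1}(k^{\oplus\r(R)})$ in the middle. You then announce the ``delicate step'' of matching the pushout $W=P/B$ with the candidate $(F_t^*)^{\oplus\r(R)}/\image(F_{t-1}^*)^{\oplus\r(R)}$, but you never carry it out; the sentence beginning ``The delicate step is showing that the approximation map slots into this diagram'' is precisely the content of the moreover clause, left unproved. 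That is a genuine gap, not a routine verification.

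The paper's route avoids the pushout-tracing entirely. Since $\Ext^i_R(k,R)=0=\Ext^i_R(C,R)$ for $1\le i\le t-1$ (the vanishing for $C$ is exactly what the equivalent condition~(3) of Theorem~\ref{FPD} provides, via Iyama's \cite[Proposition 1.1.1]{I}), the long exact sequence of $\Ext_R(-,R)$ applied to $0\to k\to W\to C\to 0$ forces $\Ext^i_R(W,R)=0$ for $1\le i\le t-1$; together with $\pd W\le t$, this makes $W$ a $t$-spherical module with $\Ext^t_R(W,R)\cong\Ext^t_R(k,R)\cong k^{\oplus\r(R)}$. Lemma~\ref{sph} (the identification $W\approx\tr\syz^{s-1}\Ext^s_R(W,R)$ for an $s$-spherical $W$) then gives $W\approx\tr\syz^{t-1}(k^{\oplus\r(R)})$ in one line. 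The missing idea in your write-up is exactly this: you never observe that $W$ is $t$-spherical nor invoke Lemma~\ref{sph}, and without that structural fact your attempt to build the isomorphism by hand from the pushout diagram remains incomplete.
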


To give a proof of the above corollary, we need the following lemma, which follows immediately from \cite[Theorem 2.7(1)]{O2}.

\begin{lem}\cite[Theorem 2.7]{O2}\label{sph}
Let $W$ be an $R$-module and $s$ a positive integer.
If $W$ is $s$-spherical, then $W\approx\tr\syz^{s-1}\Ext^s_R(W,R)$.
\end{lem}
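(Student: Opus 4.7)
The plan is to take a short projective resolution of $W$, dualize it, and use $s$-sphericality to reinterpret the dual complex as (the start of) a projective resolution of $N:=\Ext^s_R(W,R)$; the module $W$ itself will then reappear naturally as the transpose of the $(s-1)$st syzygy of $N$.

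Concretely, since $\pd_R W\le s$, I would fix a projective resolution
\begin{equation*}
0\to P_s\xrightarrow{d_s}P_{s-1}\to\cdots\xrightarrow{d_2}P_1\xrightarrow{d_1}P_0\to W\to 0.
\end{equation*}
Applying $(-)^\ast$ yields a complex of finitely generated projectives whose $i$th cohomology computes $\Ext^i_R(W,R)$; by $s$-sphericality these groups vanish for $1\le i\le s-1$, while the rightmost cohomology is $N$. Consequently the sequence
\begin{equation*}
P_0^\ast\xrightarrow{d_1^\ast}P_1^\ast\to\cdots\xrightarrow{d_s^\ast}P_s^\ast\to N\to 0
\end{equation*}
is exact, and I view it as (a truncation of) a projective resolution of $N$.

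From this partial resolution one reads off the syzygies of $N$: up to projective summands, $\syz^i N\approx\image(d_{s-i+1}^\ast\colon P_{s-i}^\ast\to P_{s-i+1}^\ast)$, so in particular $\syz^{s-1}N\approx\image(d_2^\ast\colon P_1^\ast\to P_2^\ast)$. Since $\ker d_2^\ast=\image d_1^\ast$ by the exactness above, the map $d_2^\ast$ induces a projective presentation $P_0^\ast\xrightarrow{d_1^\ast}P_1^\ast\to\syz^{s-1}N\to 0$. Dualizing this presentation and using the canonical identification $P_i^{\ast\ast}\cong P_i$ for finitely generated projectives then gives
\begin{equation*}
\tr\syz^{s-1}N\approx\Cok(d_1^{\ast\ast}\colon P_1^{\ast\ast}\to P_0^{\ast\ast})\cong\Cok d_1=W,
\end{equation*}
which is the desired relation $W\approx\tr\syz^{s-1}\Ext^s_R(W,R)$.

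The main technical care will be in the bookkeeping of projective summands: both $\syz$ and $\tr$ are only well-defined up to projective direct summands, so every intermediate $\cong$ above is really an $\approx$, and I need to verify that these ambiguities accumulate only into the final $\approx$. A secondary subtlety is that the displayed sequence is merely a truncated resolution of $N$ — to extend it leftward one would have to append a resolution of $W^\ast=\ker d_1^\ast$ — but this truncation is harmless because $\syz^{s-1}N$ is determined by just the first $s$ projectives in any projective resolution of $N$.
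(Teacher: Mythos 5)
Your argument is correct. Note that the paper does not prove this lemma at all --- it cites \cite[Theorem 2.7]{O2} and simply records the statement --- so there is no in-paper proof to compare against. Your direct argument is the natural one and works: dualizing the length-$s$ projective resolution of $W$ and using $\Ext^i_R(W,R)=0$ for $1\le i\le s-1$ shows that $P_0^\ast\xrightarrow{d_1^\ast}P_1^\ast\to\cdots\xrightarrow{d_s^\ast}P_s^\ast\to N\to 0$ is exact, so $P_0^\ast\xrightarrow{d_1^\ast}P_1^\ast\to\syz^{s-1}N\to 0$ is a projective presentation of $\syz^{s-1}N$, and dualizing once more via $P_i^{\ast\ast}\cong P_i$ recovers $\Cok d_1\cong W$ as $\tr\syz^{s-1}N$, up to projective summands. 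The two bookkeeping points you flag are exactly the right ones to watch, and both are harmless: $\syz^{s-1}N$ depends only on the displayed truncation because that truncation is already exact at $P_1^\ast,\dots,P_s^\ast,N$, and all the identifications are genuinely only up to projective summands, which is all $\approx$ requires. (In the degenerate case $\pd_R W<s$, the $s$-sphericality hypothesis together with $\Ext^{\pd W}_R(W,R)\ne0$ forces $W$ to be projective, and then both sides are $\approx 0$, so the statement still holds.)
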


\begin{proof}[Proof of Corollary \ref{kapp}]
By \cite[Theorem 4.1(2)]{DT}, $\syz^t k$ is $(t+1)$-torsionfree.
It follows from Theorem \ref{FPD} that there exists an exact sequence $0\to k\to W\to C\to 0$ such that $W$ has projective dimension at most $t$ and $C\approx\tr\syz^{t+1}\tr\syz^t k$.
We assume that $t$ is positive.
Then since $\Ext^i(k,R)=0=\Ext^i(C,R)$ for all $1\le i\le t-1$, so does $W$. 
Hence $W$ is $t$-spherical and $\Ext^t(W,R)\cong\Ext^t(k,R)\cong k^{\oplus\r(R)}$.
By Lemma \ref{sph}, we obtain that $W\approx\tr\syz^{t-1}\Ext^t(W, R)\cong\tr\syz^{t-1}(k^{\oplus \r(R)})$.
\end{proof}

Grades of $\Ext$ modules are one of the main subjects of the theory of Auslander and Bridger; see \cite[Chapters 2 and 4]{AB}.
Recall that the {\em grade} of an $R$-module $M$ is defined to be the infimum of integers $i$ such that $\Ext^i_R(M,R)\ne0$, and denoted by $\grade_R M$.
We state the relationship between Theorem \ref{FPD} and the grade condition given by Auslander and Bridger.

\begin{cor}\label{ext}
Let $n\ge0$ be an integer and $M$ an $R$-module.
If $\syz^n M$ is $(n+1)$-torsionfree, then $\grade_R\Ext^i_R(M,R)\ge i$ for all integers $1\le i\le n$.
\end{cor}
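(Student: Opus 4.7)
The plan is to combine Theorem \ref{FPD} with a localization argument using the Auslander--Buchsbaum formula. First, I would invoke Theorem \ref{FPD} to translate the hypothesis that $\syz^n M$ is $(n+1)$-torsionfree into the existence of a short exact sequence
\[
0\longrightarrow M\longrightarrow W\longrightarrow C\longrightarrow 0
\]
with $\pd_R W\le n$ and $\Ext^j_R(C,R)=0$ for $1\le j\le n+1$. Feeding this sequence into the long exact sequence of $\Ext^\bullet_R(-,R)$ and using that both flanking groups $\Ext^i_R(C,R)$ and $\Ext^{i+1}_R(C,R)$ vanish whenever $1\le i\le n$, I obtain isomorphisms $\Ext^i_R(M,R)\cong\Ext^i_R(W,R)$ in that range, so the problem reduces to bounding the grade of $\Ext^i_R(W,R)$ for a module $W$ of finite projective dimension.

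For that reduction I would use the standard identity $\grade_R E=\inf\{\depth R_\p\mid\p\in\supp_R E\}$, valid for any finitely generated $R$-module $E$. Applied to $E=\Ext^i_R(W,R)$, it suffices to show $\depth R_\p\ge i$ for every prime $\p$ at which $\Ext^i_R(W,R)$ does not vanish. For such $\p$, nonvanishing of $\Ext^i_{R_\p}(W_\p,R_\p)$ forces $\pd_{R_\p}W_\p\ge i$, and since $\pd_{R_\p}W_\p<\infty$ the Auslander--Buchsbaum formula then yields
\[
\depth R_\p=\pd_{R_\p}W_\p+\depth_{R_\p}W_\p\ge i,
\]
which is the inequality needed.

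In summary the proof is a three-step mechanism: Theorem \ref{FPD} replaces $M$ by a module $W$ of projective dimension at most $n$ whose first $n$ Ext modules agree with those of $M$; the grade--depth formula localizes the grade inequality to a statement about $\depth R_\p$; and Auslander--Buchsbaum converts nonvanishing of $\Ext^i_{R_\p}(W_\p,R_\p)$ into the required lower bound on $\depth R_\p$. There is no serious obstacle here; the only point worth watching is that the vanishing of $\Ext^j_R(C,R)$ must be used simultaneously in degrees $j=i$ and $j=i+1$, which is precisely what forces the restriction $1\le i\le n$ rather than $1\le i\le n+1$.
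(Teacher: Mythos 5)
Your proof is correct and follows the same overall strategy as the paper: invoke Theorem \ref{FPD} to produce the exact sequence $0\to M\to W\to C\to 0$ with $\pd_R W\le n$ and $\Ext^j_R(C,R)=0$ for $1\le j\le n+1$, then use the long exact sequence to deduce $\Ext^i_R(M,R)\cong\Ext^i_R(W,R)$ for $1\le i\le n$. The only difference is in the final step: the paper cites \cite[Corollary 4.17]{AB} for the fact that $\grade_R\Ext^i_R(W,R)\ge i$ when $W$ has finite projective dimension, whereas you reprove this from scratch via the identity $\grade_R E=\inf\{\depth R_\p:\p\in\supp_R E\}$, the compatibility of $\Ext$ with localization, and the Auslander--Buchsbaum formula. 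Your localization argument is correct (with the implicit convention that the zero module has infinite grade), so you have effectively supplied a self-contained proof of the cited Auslander--Bridger result rather than adopting a different line of attack.
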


\begin{proof}
By Theorem \ref{FPD}, there exists an exact sequence $0\to M\to W\to C\to 0$ of $R$-modules such that $W$ has projective dimension at most $n$ and $\Ext^i(C,R)=0$ for all $1\le i\le n+1$.
Hence we have $\Ext^i(M,R)\cong\Ext^i(W,R)$ for all $1\le i\le n$.
Since $W$ has finite projective dimension, $\grade\Ext^i(M,R)=\grade\Ext^i(W,R)\ge i$ for all $1\le i\le n$ by \cite[Corollary 4.17]{AB}.
\end{proof}



We close this section by stating an example for Theorem \ref{FPD}.
The following example asserts that the implication $(4)\Longrightarrow(1)$ in Theorem \ref{FPD} does not hold in general.

\begin{ex}
Let $R=k[\![x,y,z]\!]/(x^2, xy, y^2 z)$.
Then $R$ is a $1$-dimensional Cohen--Macaulay local ring with $y+z$ a parameter.
Here we construct an $R$-module $M$ such that $M$ is not torsionless (i.e. cannot be embedded in any free module), can be embedded in a module with projective dimension one, and $\syz M$ is not reflexive.
In this case, the module $M$ satisfies the condition $(4)$ in Theorem \ref{FPD} but does not satisfy the condition $(1)$.

We denote by $f:R\to (y)\oplus(z)$ the $R$-homomorphism given by $f(1)=\binom{y}{z}$.
Note that the homomorphism $\binom{y}{z}:R\to R^2$ is injective by the Hilbert--Burch theorem \cite[Theorem 1.4.17]{BH}, and hence $f$ is also injective.
Put $M=\Cok f$ and $W=\Cok\binom{y}{z}$.
We obtain the commutative diagram with exact rows and columns

$$
\xymatrix@R-1pc@C-1pc{
&&0\ar[d]&0\ar[d]&\\
0\ar[r]&R\ar@{=}[d]\ar[r]^-{f}&(y)\oplus(z)\ar[d]^-{\iota}\ar[r]&M\ar[r]\ar[d]&0\\
0\ar[r]&R\ar[r]^-{\binom{y}{z}}&R^2\ar[d]\ar[r]&W\ar[d]\ar[r]&0\\
&&{R/(y)}\oplus{R/(z)}\ar[d]\ar@{=}[r]&{R/(y)}\oplus{R/(z)}\ar[d]\\
&&0&0,
}
$$
where $\iota:(y)\oplus(z)\to R^2$ is the natural inclusion.
Applying $\Hom_R(k,-)$ to the first row of the commutative diagram above, we have the exact sequence
$$
0=\Hom_R(k,R^2)\to\Hom_R(k,W)\to\Ext^1_R(k,R)\xrightarrow{0}\Ext^1_R(k,R^2).
$$
Since $R$ has type $2$, $\Hom_R(k,W)$ is isomorphic to $k^{\oplus2}$.
Also, from the second column of the diagram above, we have the exact sequence
$$
0\to\Hom_R(k,M)\to\Hom_R(k,W)\to\Hom_R(k, {R/(y)})\oplus\Hom_R(k,{R/(z)}).
$$
Note that $\depth_R{R/(y)}>0$, $\depth_R{R/(z)}=0$ and $R/(z)$ has type 1.
We conclude that $\Hom_R(k,M)\ne0$, that is, $\depth_R M=0$.
In particular, $M$ is not torsionless.

Next, we show that $\syz M$ is not reflexive.
Since $\syz M\approx\syz^2(R/(y)) \oplus\syz^2(R/(z))$, it suffices to prove that $\syz^2(R/(y))$ is not reflexive.
By \cite[Proposition 2.26]{AB}, it is equivalent to proving the equality $\grade_R\Ext^2_R({R/(y)},R)=0$.
Put $\p=(x, y)$.
The ideal $\p$ is a minimal prime of $R$, and $R_{\p}\cong k[\![x,y,z]\!]_{(x, y)}/(x^2, xy, y^2)$ is an artinian non-Gorenstein local ring.
So we have
$$
\Ext^1_{R_\p}((x)R_{\p}, R_{\p})\oplus\Ext^1_{R_\p}((y)R_{\p}, R_{\p})\cong\Ext^1_{R_\p}((x, y)R_{\p}, R_{\p})\cong\Ext^2_{R_\p}(R_{\p}/{\p R_{\p}}, R_{\p})\ne0.
$$
We conclude that $\Ext^1_{R_\p}((y)R_{\p}, R_{\p})$ is nonzero by symmetry.
Now $0\ne\Ext^1_{R_\p}((y)R_{\p}, R_{\p})\cong\Ext^2_{R_\p}(R_{\p}/{(y)R_{\p}}, R_{\p})\cong\Ext^2_R(R/(y),R)_{\p}$, and therefore $\grade_R\Ext^2_R({R/(y)},R)=0$.

\end{ex}


\section{Dimensions of Ext modules related to the residue field}
In this section, when $R$ is local, we compute the lengths of Ext modules related to the residue field of $R$, using results obtained in the previous section.
For this we need the following lemma.
\begin{lem}\label{exttor}
Let $n$ be a positive integer, and let $M$ and $N$ be $R$-modules.
If $\Ext^i_R(M,R)=0$ for all integers $1\le i\le n-1$, then $\Ext^j_R(M,N)\cong\Tor^R_{n-j}(\tr\syz^{n-1}M,N)$ and $\Tor^R_j(M,N)\cong\Ext^{n-j}_R(\tr\syz^{n-1}M,N)$ for all integers $1\le j\le n-1$.
Moreover, if $M$ is $n$-spherical, then $\Ext^j_R(M,N)\cong\Tor^R_{n-j}(\Ext^n_R(M,R),N)$ and $\Tor^R_j(M,N)\cong\Ext^{n-j}_R(\Ext^n_R(M,R),N)$ for all integers $1\le j\le n$.
\end{lem}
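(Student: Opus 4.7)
My plan is to build a projective resolution of $T:=\tr\syz^{n-1}M$ out of a fixed projective resolution of $M$, and then to match both sides of each claimed isomorphism as the same subquotient of a single family of modules. Fix a projective resolution $\cdots\to P_{n+1}\to P_n\to P_{n-1}\to\cdots\to P_0\to M\to 0$, so that by definition $T=\Cok(P_{n-1}^\ast\to P_n^\ast)$. Dualizing into the complex $P_0^\ast\to P_1^\ast\to\cdots$ and applying the hypothesis $\Ext^i_R(M,R)=0$ for $1\le i\le n-1$ together with the definitions of $M^\ast$ and $T$, the augmented sequence
$$0\to M^\ast\to P_0^\ast\to P_1^\ast\to\cdots\to P_{n-1}^\ast\to P_n^\ast\to T\to 0$$
is exact. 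Splicing any projective resolution of $M^\ast$ onto its left end therefore yields a projective resolution $F_\bullet\to T$ with $F_k=P_{n-k}^\ast$ for $0\le k\le n$.

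The key tool is the natural isomorphism $\alpha_P\colon P\otimes_R N\xrightarrow{\sim}\Hom_R(P^\ast,N)$, $p\otimes y\mapsto(\phi\mapsto\phi(p)\,y)$, valid for every finitely generated projective $R$-module $P$. Since $\alpha_P$ is natural in $P$, it intertwines the differentials of the two relevant complexes and yields, for each $0\le k\le n$,
$$\Hom_R(P_k,N)\cong P_k^\ast\otimes_R N,\qquad \Hom_R(F_k,N)\cong P_{n-k}\otimes_R N.$$
Consequently, for $1\le j\le n-1$ both $\Ext^j_R(M,N)=H^j(\Hom_R(P_\bullet,N))$ and $\Tor^R_{n-j}(T,N)=H_{n-j}(F_\bullet\otimes_R N)$ are realized as
$$\frac{\Ker\bigl(P_j^\ast\otimes_R N\to P_{j+1}^\ast\otimes_R N\bigr)}{\image\bigl(P_{j-1}^\ast\otimes_R N\to P_j^\ast\otimes_R N\bigr)},$$
while $\Tor^R_j(M,N)$ and $\Ext^{n-j}_R(T,N)$ are both realized as the analogous subquotient of $P_j\otimes_R N$ formed with the original resolution differentials. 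The restriction $1\le j\le n-1$ is exactly the range in which the three positions involved all lie inside $[0,n]$, where $F_\bullet$ agrees with the dualized $P$-resolution; this settles the first half of the lemma.

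For the moreover clause I would shrink the resolution so that $P_i=0$ for $i>n$, which is possible since $n$-sphericity gives $\pd_R M\le n$. Then $\Ext^n_R(M,R)$ reduces to $P_n^\ast/\image(P_{n-1}^\ast\to P_n^\ast)=T$, so the earlier isomorphisms hold with $\Ext^n_R(M,R)$ in place of $T$. Moreover the previously troublesome boundary term $P_{n+1}\otimes_R N$ is now zero, which yields the endpoint identifications $\Ext^n_R(M,N)\cong T\otimes_R N$ and $\Tor^R_n(M,N)\cong\Hom_R(T,N)$ directly from the right-exactness of $-\otimes_R N$ and left-exactness of $\Hom_R(-,N)$ applied to $P_{n-1}^\ast\to P_n^\ast\to T\to 0$. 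The main technical care I foresee is less mathematical than bookkeeping: verifying that $\alpha_P$ really intertwines the differentials of $P_\bullet$ and $F_\bullet$, and checking that the indices line up position by position, after which the lemma reduces to reading off the same subquotient from two different complexes.
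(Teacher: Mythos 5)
Your proposal is correct and takes essentially the same approach as the paper: both dualize a projective resolution of $M$ to get (a truncation of) a projective resolution of $\tr\syz^{n-1}M$, then use the natural isomorphism $P^\ast\otimes_R N\cong\Hom_R(P,N)$ for finitely generated projectives $P$ to identify the resulting Tor and Ext complexes degree by degree. You are a bit more explicit than the paper about splicing a resolution of $M^\ast$ onto the left end and about the index bookkeeping, but the argument is the same.
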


\begin{proof}
Let $P_n\to P_{n-1}\to\cdots\to P_1\to P_0\to M\to 0$ be a projective resolution of $M$.
Since $\Ext^i(M,R)=0$ for all $1\le i\le n-1$, the $R$-dual $P_0^\ast \to P_1^\ast \to \cdots\to P_{n-1}^\ast \to P_n^\ast \to\tr\syz^{n-1}M\to0$ is exact.
We have the complex isomorphism
$$
\xymatrix@R-1pc@C-1pc{
P_0^\ast \otimes N\ar[r]\ar[d]^{\cong} & P_1^\ast \otimes N\ar[r]\ar[d]^{\cong} & \cdots\ar[r] & P_{n-1}^\ast \otimes N\ar[r]\ar[d]^{\cong} & P_n^\ast \otimes N\ar[d]^{\cong} \\
\Hom(P_0,N)\ar[r] & \Hom(P_1,N)\ar[r] & \cdots\ar[r] & \Hom(P_{n-1},N)\ar[r] & \Hom(P_n,N).
}
$$
Therefore, $\Ext^j(M,N)\cong\Tor_{n-j}(\tr\syz^{n-1}M,N)$ for all $1\le j\le n-1$.
If $M$ is $n$-spherical, then $\tr\syz^{n-1}M\approx\Ext^n(M,R)$; see \cite[Theorem 2.7]{O2}.
We only need to prove the case where $j=n$.
In this case, by the complex isomorphism above, we have $\Ext^n(M,N)\cong\Cok(\Hom(P_{n-1},N)\to\Hom(P_n,N))\cong\Cok(P_{n-1}^\ast \otimes N\to P_n^\ast \otimes N)\cong\Ext^n(M,R)\otimes N$. 
The dual statement is seen similarly.
\end{proof}

Let $R$ be a local ring with maximal ideal $\m$ and $M$ an $R$-module.
For all integers $i$, the $i$-th {\em Betti number} $\beta_i(M)$ (resp. {\em Bass number} $\mu_i(M)$) of $M$ is defined as $\dim_{R/\m}\Tor^R_i(R/\m, M)$ (resp. $\dim_{R/\m}\Ext^i_R(R/\m, M)$).
We note that if $i<0$, then $\beta_i(M)=\mu_i(M)=0$ for all modules $M$.

The following assertion is a corollary of the above lemma.

\begin{lem}\label{let}
Let $(R,\m,k)$ be local with depth $t>0$.
Let $M$ be an $R$-module and $1\le j\le t$ an integer.
Then, for all integers $i>0$, the modules $\Ext^i_R(\tr\syz^{j-1}k,M)$ and $\Tor^R_i(\tr\syz^{j-1}k,M)$ are $k$-vector spaces of dimensions $\beta_{j-i}(M)$ and $\mu_{j-i}(M)$, respectively.
\end{lem}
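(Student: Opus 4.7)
The plan is to produce an explicit finite free resolution of $\tr\syz^{j-1}k$ of length at most $j$ by $R$-dualizing a free resolution of $k$, and then to read off both $\Ext^i_R(\tr\syz^{j-1}k,M)$ and $\Tor^R_i(\tr\syz^{j-1}k,M)$ by applying $\Hom_R(-,M)$ and $-\otimes_R M$ to that resolution. This one construction will handle the vanishing for $i>j$ and the dimension count for $1\le i\le j$ simultaneously.

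To build the resolution, I would fix a free resolution $\cdots\to P_1\to P_0\to k\to 0$. Since $1\le j\le t$ and $t>0$, we have $\Hom_R(k,R)=0$ and $\Ext^i_R(k,R)=0$ for all $1\le i\le j-1$, so the dualized cochain complex $0\to P_0^\ast\to P_1^\ast\to\cdots\to P_j^\ast$ is exact. As $\Cok(P_{j-1}^\ast\to P_j^\ast)\approx\tr\syz^{j-1}k$ by the very definition of the Auslander transpose, this extends to a finite free resolution
\[
0\to P_0^\ast\to P_1^\ast\to\cdots\to P_{j-1}^\ast\to P_j^\ast\to\tr\syz^{j-1}k\to 0.
\]
In particular $\pd_R(\tr\syz^{j-1}k)\le j$, so $\Ext^i_R$ and $\Tor^R_i$ against $M$ vanish automatically for $i>j$, in accord with $\beta_{j-i}(M)=\mu_{j-i}(M)=0$ in that range.

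For $1\le i\le j$, I would apply $\Hom_R(-,M)$ to the above resolution and use the natural isomorphism $\Hom_R(F^\ast,M)\cong F\otimes_R M$ for free $F$ to identify the computing cochain complex with a truncation of $P_\bullet\otimes_R M$; symmetrically, $-\otimes_R M$ produces a truncation of $\Hom_R(P_\bullet,M)$. A degree count then yields
\[
\Ext^i_R(\tr\syz^{j-1}k,M)\cong\Tor^R_{j-i}(k,M),\qquad \Tor^R_i(\tr\syz^{j-1}k,M)\cong\Ext^{j-i}_R(k,M),
\]
so both sides are $k$-vector spaces of the claimed dimensions $\beta_{j-i}(M)$ and $\mu_{j-i}(M)$. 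The main obstacle will be the boundary case $i=j$: at the extreme end of a length-$j$ resolution the relevant cohomology/homology appears as a cokernel and a kernel rather than a full subquotient, so one must verify by direct inspection that this cokernel is $k\otimes_R M=\Tor^R_0(k,M)$ and that the corresponding kernel for $\Tor_j$ is $\Hom_R(k,M)=\Ext^0_R(k,M)$. For the intermediate range $1\le i\le j-1$ one can alternatively appeal to Lemma \ref{exttor}, whose hypothesis $\Ext^i_R(k,R)=0$ for $1\le i\le j-1$ is precisely what $j\le t$ guarantees.
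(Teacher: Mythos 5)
Your proposal is correct and follows essentially the same route as the paper: the paper's proof observes that $\tr\syz^{j-1}k$ is $j$-spherical and invokes Lemma \ref{exttor} together with Lemma \ref{sph}, and Lemma \ref{exttor} is itself proved by exactly the resolution-dualization you carry out explicitly. Your extra care at the boundary degree $i=j$ (identifying the end cokernel with $k\otimes_R M$ and the end kernel with $\Hom_R(k,M)$) is precisely what the sphericality hypothesis in the ``moreover'' clause of Lemma \ref{exttor} is packaging.
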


\begin{proof}
Let $i>0$ be an integer.
Since $\tr\syz^{j-1}k$ is $j$-spherical, if $j<i$, then $\Ext^i(\tr\syz^{j-1}k,M)$ and $\Tor_i(\tr\syz^{j-1}k,M)$ are the zero modules.
We may assume that $j\ge i$.
Note that $\Ext^j(\tr\syz^{j-1}k,R)\cong k$ by Lemma \ref{sph}.
Using Lemma \ref{exttor}, we obtain the isomorphisms
$$
\Ext^i(\tr\syz^{j-1}k,M)\cong\Tor_{j-i}(\Ext^j(\tr\syz^{j-1}k,R),M)\cong\Tor_{j-i}(k,M)\cong k^{\oplus \beta_{j-i}(M)}
$$
and the isomorphisms
$$
\Tor_i(\tr\syz^{j-1}k,M)\cong\Ext^{j-i}(\Ext^j(\tr\syz^{j-1}k,R),M)\cong\Ext^{j-i}(k,M)\cong k^{\oplus \mu_{j-i}(M)}.
$$\end{proof}




For a short exact sequence, if its $R$-dual is also exact, then the following holds for the long exact sequences of Tor and Ext.
This lemma is necessary to compute the dimensions of Ext modules in the proof of the main result of this section.
 \begin{lem}\label{connec}
 Let $0\to A\xrightarrow{f}B\xrightarrow{g}C\to0$ be an exact sequence of $R$-modules.
 Suppose that the $R$-dual $0\to C^\ast\xrightarrow{g^\ast}B^\ast\xrightarrow{f^\ast}A^\ast\to0$ is also exact.
 Then for any $R$-module $X$ there exists a long exact sequence
 $$
 \cdots\to\Tor_1^R(A,X)\to\Tor_1^R(B,X)\to\Tor_1^R(C,X)\to\Ext^1_R(\tr A,X)\to\Ext^1_R(\tr B,X)\to\Ext^1_R(\tr C,X)\to\cdots.
 $$
 \end{lem}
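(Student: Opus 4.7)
The plan is to derive from the hypothesis a companion short exact sequence $0\to\tr C\to\tr B\to\tr A\to 0$, and then splice the long exact Tor sequence of the original with the long exact Ext sequence of the companion, using as glue the Auslander--Bridger four-term exact sequence
\[0\to\Ext^1_R(\tr M,X)\to M\otimes_R X\xrightarrow{\sigma_M}\Hom_R(M^\ast,X)\to\Ext^2_R(\tr M,X)\to 0,\]
where $\sigma_M(m\otimes x)(\phi)=\phi(m)x$, natural in $M$.

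For the companion sequence, first I would apply the horseshoe lemma to $0\to A\to B\to C\to 0$ to produce compatible projective presentations $P_1^M\to P_0^M\to M\to 0$ fitting into a degreewise split short exact sequence $0\to P_\bullet^A\to P_\bullet^B\to P_\bullet^C\to 0$. Dualizing gives a degreewise split short exact sequence $0\to(P_\bullet^C)^\ast\to(P_\bullet^B)^\ast\to(P_\bullet^A)^\ast\to 0$. For each $M\in\{A,B,C\}$ the presentation yields the four-term exact sequence $0\to M^\ast\to(P_0^M)^\ast\to(P_1^M)^\ast\to\tr M\to 0$; stacking these for $M=C,B,A$ produces a $3\times 4$ commutative diagram with exact rows whose first three columns are short exact (the first by hypothesis, the other two by the splittings). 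Splitting each row into two short exact sequences and applying the $3\times 3$-lemma twice forces the remaining column $0\to\tr C\to\tr B\to\tr A\to 0$ to be short exact.

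Next I would apply $-\otimes_R X$ to the original sequence and $\Hom_R(-,X)$ to the companion sequence to obtain the standard long exact Tor and Ext sequences. The splice is the map $\Tor_1^R(C,X)\to\Ext^1_R(\tr A,X)$ constructed as follows. The Auslander--Bridger sequence identifies $\Ext^1_R(\tr A,X)$ with $\ker\sigma_A$. The Tor connecting map $\partial\colon\Tor_1^R(C,X)\to A\otimes_R X$ has image $\ker(f\otimes 1_X)$; surjectivity of $f^\ast$ combined with naturality of $\sigma$ forces $\ker(f\otimes 1_X)\subseteq\ker\sigma_A$, so that $\partial$ factors through $\Ext^1_R(\tr A,X)$.

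The step I expect to be the main obstacle is verifying exactness at the splice points. Within the Tor and Ext blocks, exactness is inherited from the respective long exact sequences. At $\Tor_1^R(C,X)$, the splicing map factors as $\partial$ followed by the injection $\Ext^1_R(\tr A,X)\hookrightarrow A\otimes X$, so its kernel equals $\ker\partial$, matching the image of $\Tor_1^R(B,X)\to\Tor_1^R(C,X)$. At $\Ext^1_R(\tr A,X)$, the crucial observation is that by naturality of $\sigma$ the map $\Ext^1_R(\tr A,X)\to\Ext^1_R(\tr B,X)$ coincides with the restriction of $f\otimes 1_X$, so its kernel is $\Ext^1_R(\tr A,X)\cap\ker(f\otimes 1_X)=\ker(f\otimes 1_X)$, which is exactly the image of the splice map.
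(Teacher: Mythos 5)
Your proof is correct, and it shares its opening move with the paper but diverges at the splicing step. Both you and the paper apply the horseshoe lemma to the original sequence, dualize the resulting diagram of compatible presentations (using the hypothesis that $0\to C^\ast\to B^\ast\to A^\ast\to 0$ is exact together with the degreewise splittings), and conclude that $0\to\tr C\to\tr B\to\tr A\to 0$ is exact; the paper records this as exactness of rows and columns of the dualized diagram, whereas you invoke the $3\times 3$-lemma explicitly, but the content is identical. Where you part ways is in how the two long exact sequences are glued. The paper applies $\Hom_R(-,X)$ to the entire dualized diagram and runs the snake lemma once to produce the six-term bridge
$$0\to\Hom(T_A,X)\to\Hom(T_B,X)\to\Hom(T_C,X)\to A\otimes X\to B\otimes X\to C\otimes X\to 0,$$
which overlaps the Tor long exact sequence on the right and the Ext long exact sequence on the left, and the splice is obtained by matching these overlapping three-term pieces. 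You instead work locally: you identify $\Ext^1_R(\tr M,X)$ with $\ker\sigma_M$ via the Auslander--Bridger four-term sequence (which is nothing but the single-column version of the paper's diagram), observe that surjectivity of $f^\ast$ plus naturality of $\sigma$ forces $\ker(f\otimes 1_X)\subseteq\ker\sigma_A$, corestrict the Tor connecting map, and then check exactness at the two seam points by hand using the same naturality square. Your route is a bit more modular since it reuses a citable result in place of a large snake lemma, while the paper's route delivers the whole bridge in one stroke and avoids the separate exactness checks; both are valid, and the underlying homological content is the same. One minor phrasing slip: the splice map does not factor as $\partial$ followed by the injection $\Ext^1_R(\tr A,X)\hookrightarrow A\otimes X$ (that composite is not even defined); rather, it is the unique map whose composite \emph{with} that injection is $\partial$, which is what you actually use to conclude its kernel equals $\ker\partial$.
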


 \begin{proof}
By the horseshoe lemma, we have the commutative diagram
$$
\xymatrix@R-1pc@C-1pc{
0\ar[r]&P_1\ar[d]\ar[r]&P_1\oplus Q_1\ar[d]\ar[r]&Q_1\ar[d]\ar[r]&0\\
0\ar[r]&P_0\ar[d]\ar[r]&P_0\oplus Q_0\ar[d]\ar[r]&Q_0\ar[d]\ar[r]&0\\
0\ar[r]&A\ar[d]\ar[r]&B\ar[d]\ar[r]&C\ar[d]\ar[r]&0\\
&0&0&0&
}
$$
with exact rows and columns, where $P_i$ and $Q_i$ are projective for $i=0, 1$.
Since $0\to C^\ast\xrightarrow{g^\ast}B^\ast\xrightarrow{f^\ast}A^\ast\to0$ is exact, the $R$-dual diagram
$$
\xymatrix@R-1pc@C-1pc{
&0\ar[d]&0\ar[d]&0\ar[d]&\\
0\ar[r]&C^\ast\ar[d]\ar[r]&B^\ast\ar[d]\ar[r]&A^\ast\ar[d]\ar[r]&0\\
0\ar[r]&Q_0^\ast\ar[d]\ar[r]&(P_0\oplus Q_0)^\ast\ar[d]\ar[r]&P_0^\ast\ar[d]\ar[r]&0\\
0\ar[r]&Q_1^\ast\ar[d]\ar[r]&(P_1\oplus Q_1)^\ast\ar[d]\ar[r]&P_1^\ast\ar[d]\ar[r]&0\\
0\ar[r]&T_C\ar[d]\ar[r]^{t_g}&T_B\ar[d]\ar[r]&T_A\ar[d]\ar[r]&0\\
&0&0&0&
}
$$
has exact rows and columns, where $T_A\approx\tr A$, $T_B\approx\tr B$ and $T_C\approx\tr C$.
Taking $\Hom(-,X)$, we obtain a commutative diagram with exact rows and columns
$$
\xymatrix@R-1pc@C-1pc{
&0\ar[d]&0\ar[d]&0\ar[d]&\\
0\ar[r]&\Hom(T_A, X)\ar[d]\ar[r]&\Hom(T_B,X)\ar[d]\ar[r]&\Hom(T_C,X)\ar[d]&\\
0\ar[r]&\Hom(P_1^\ast,X)\ar[d]\ar[r]&\Hom((P_1\oplus Q_1)^\ast,X)\ar[d]\ar[r]&\Hom(Q_1^\ast,X)\ar[d]\ar[r]&0\\
0\ar[r]&\Hom(P_0^\ast,X)\ar[d]\ar[r]&\Hom((P_0\oplus Q_0)^\ast,X)\ar[d]\ar[r]&\Hom(Q_0\ast,X)\ar[d]\ar[r]&0\\
&A\otimes X\ar[d]\ar[r]&B\otimes X\ar[d]\ar[r]&C\otimes X\ar[d]\ar[r]&0\\
&0&0&0.&
}
$$
By the snake lemma, we have an exact sequence
$$
0\to\Hom(T_A,X)\to\Hom(T_B,X)\xrightarrow{\Hom(t_g,X)}\Hom(T_C,X)\to A\otimes X\xrightarrow{f\otimes X} B\otimes X\to C\otimes X\to 0.
$$
Also, there are long exact sequences
$$
0\to\Hom(T_A,X)\to\Hom(T_B,X)\xrightarrow{\Hom(t_g,X)}\Hom(T_C,X)\to\Ext^1(\tr A,X)\to\Ext^1(\tr B,X)\to\cdots
$$
and
$$
\cdots\to\Tor_1(A,X)\to\Tor_1(B,X)\to\Tor_1(C,X)\to A\otimes X\xrightarrow{f\otimes X} B\otimes X\to C\otimes X\to 0.
$$
By connecting these sequences, we have the desired long exact sequences.
\end{proof}

Using the results obtained in this section and Corollary \ref{kapp}, we can give a proof of the following theorem, which is the main result of this section.
The following theorem can be regarded as a generalization of some results about the $n$-torsionfreeness of syzygies of the residue field of a local ring $R$; see Corollaries \ref{M=R} and \ref{DTtype}.
 
\begin{thm}\label{t+1}
Let $(R,\m,k)$ be a local ring of depth $t$ and $M$ an $R$-module of depth at least $t$.
Then, the module $\Ext^i_R(\tr\syz^t k,M)$ is a $k$-vector space for all integers $i>0$ and the following hold.
\begin{enumerate}[\rm(1)]
\item
One has $\dim_k \Ext^i_R(\tr\syz^t k,M)=\beta_{t-i+1}(M)$ for all integers $1\le i\le t$.
\item
One has $\dim_k \Ext^{t+1}_R(\tr\syz^t k,M)-\dim_k \Ext^{t+2}_R(\tr\syz^t k,M)=\beta_0(M)-r(R)\mu_t(M)$.
\item
One has $\dim_k\Ext^{i}_R(\tr\syz^t k,M)=\r(R)\mu_{i-2}(M)$ for all integers $i\ge t+3$.
\end{enumerate}
\end{thm}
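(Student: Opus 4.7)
The plan is to combine Corollary~\ref{kapp} with Lemmas~\ref{connec} and~\ref{exttor}. Corollary~\ref{kapp} supplies an exact sequence $0\to k\to W\to C\to 0$ in which $\pd W\le t$, $W\approx\tr\syz^{t-1}(k^{\oplus\r(R)})$ when $t>0$, and $C\approx\tr\syz^{t+1}\tr\syz^t k$; Theorem~\ref{FPD} moreover guarantees that $\Ext^i_R(C,R)=0$ for $1\le i\le t+1$. Since $k^\ast=0$ when $t>0$ and $\Ext^1_R(C,R)=0$, the $R$-dual sequence $0\to C^\ast\to W^\ast\to k^\ast\to 0$ is also exact, so Lemma~\ref{connec} applies and produces a master long exact sequence
\begin{equation*}
\cdots\to\Tor^R_i(k,M)\to\Tor^R_i(W,M)\to\Tor^R_i(C,M)\to\Ext^i_R(\tr k,M)\to\Ext^i_R(\tr W,M)\to\Ext^i_R(\tr C,M)\to\Ext^{i+1}_R(\tr k,M)\to\cdots
\end{equation*}
that will drive the entire argument.

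Next I simplify every family of terms using Lemma~\ref{let}. The hypothesis $\depth_R M\ge t$ forces $\mu_{t-i}(M)=0$ for all $i\ge 1$, so Lemma~\ref{let} with $j=t$ yields $\Tor^R_i(W,M)=0$ for $i\ge 1$; the same lemma with $j=1$ gives $\Ext^i_R(\tr k,M)=0$ for $i\ge 2$ together with $\dim_k\Ext^1_R(\tr k,M)=\beta_0(M)$. Passing to transposes via $\tr\tr X\approx X$ gives $\tr W\approx\syz^{t-1}(k^{\oplus\r(R)})$ and $\tr C\approx\syz^{t+1}\tr\syz^t k$, whence for $i\ge 1$ one has
\begin{equation*}
\dim_k\Ext^i_R(\tr W,M)=\r(R)\mu_{i+t-1}(M)\qquad\text{and}\qquad\Ext^i_R(\tr C,M)\cong\Ext^{i+t+1}_R(\tr\syz^t k,M).
\end{equation*}
In particular every term of the master sequence is a $k$-vector space.

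With these identifications in hand, parts~(3) and~(2) fall out immediately. For $i\ge 2$ the master sequence collapses to $\Ext^i_R(\tr W,M)\cong\Ext^i_R(\tr C,M)$, which after re-indexing gives part~(3). For $i=1$ it yields the four-term exact sequence
\begin{equation*}
0\to\Tor^R_1(C,M)\to\Ext^1_R(\tr k,M)\to\Ext^1_R(\tr W,M)\to\Ext^1_R(\tr C,M)\to 0,
\end{equation*}
and the alternating dimension sum evaluates to $\beta_0(M)-\r(R)\mu_t(M)$ once $\Tor^R_1(C,M)\cong\Ext^{t+1}_R(\tr\syz^t k,M)$ is recognized via Lemma~\ref{exttor} (applicable with $n=t+2$ because $\syz^t k$ is $(t+1)$-torsionfree by \cite[Theorem 4.1(2)]{DT}). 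Part~(1) is the same mechanism in another guise: Lemma~\ref{exttor} with $n=t+2$ provides the isomorphism $\Ext^j_R(\tr\syz^t k,M)\cong\Tor^R_{t+2-j}(C,M)$ for $1\le j\le t+1$, and the standard long exact sequence of Tor for $0\to k\to W\to C\to 0$, combined with the vanishing $\Tor^R_i(W,M)=0$ for $i\ge 1$, collapses to $\Tor^R_{i'}(C,M)\cong\Tor^R_{i'-1}(k,M)$ for $i'\ge 2$, producing $\dim_k\Ext^j_R(\tr\syz^t k,M)=\beta_{t-j+1}(M)$ as claimed.

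The main obstacle I anticipate is bookkeeping: tracking the $\approx$-identifications modulo projective summands and ensuring the index shifts mesh correctly with the connecting maps in Lemma~\ref{connec}. A minor but unavoidable subtlety is the case $t=0$: there $W$ is free (so $\tr W$ is stably zero) and Lemma~\ref{let} cannot be applied to $\tr k$, so instead one appeals to the Auslander fundamental four-term sequence $0\to\Ext^1_R(\tr k,M)\to k\otimes_R M\to\Hom_R(k^\ast,M)\to\Ext^2_R(\tr k,M)\to 0$ and uses $k^\ast\cong k^{\oplus\r(R)}$; parts~(2) and~(3) then follow by an Euler-characteristic argument, while part~(1) is vacuous.
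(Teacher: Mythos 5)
Your proof is correct and follows essentially the same route as the paper's: it uses Corollary \ref{kapp} to obtain the exact sequence $0\to k\to W\to C\to 0$, combines Lemma \ref{connec} with Lemmas \ref{exttor} and \ref{let} (and the identity $\tr\syz^{t+1}\tr\syz^{t+1}\tr\syz^t k\approx\tr\syz^t k$) to identify the terms, and handles $t=0$ separately via the Auslander exact sequence, exactly as the paper does. One notational slip worth flagging: the displayed ``master'' sequence inserts a connecting map $\Tor^R_i(C,M)\to\Ext^i_R(\tr k,M)$ at every degree $i$, whereas the long exact sequence of Lemma \ref{connec} crosses from Tor to Ext only once, via $\Tor^R_1(C,M)\to\Ext^1_R(\tr k,M)$ (for $i\ge2$ the correct adjacency is $\Tor^R_i(C,M)\to\Tor^R_{i-1}(k,M)$) --- but your subsequent uses invoke the correct pieces of the sequence, so this cosmetic issue does not affect the argument.
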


\begin{proof}
Assume that $t=0$.
There exists an exact sequence $0\to\Ext^1(\tr k,M)\to k\otimes M\to\Hom(k^\ast, M)\to\Ext^2(\tr k,M)\to0$ by \cite[Proposition 2.6(a)]{AB}.
Since $k\otimes M\cong k^{\oplus\beta_0(M)}$ and $\Hom(k^\ast ,M)\cong \Hom(k^{\oplus\r(R)},M)\cong k^{\oplus\r(R)\mu_0(M)}$, the assertion (2) follows.
Also, as $\syz^2 \tr k\approx k^\ast \cong k^{\r(R)}$, we have $\Ext^i(\tr k,M)\cong\Ext^{i-2}(\syz^2 \tr k,M)\cong k^{\oplus\r(R)\mu_{i-2}(M)}$ for all $i>2$.
In this case, we have the desired result.

Next, we handle the case where $t$ is positive.
By Corollary \ref{kapp}, there exists an exact sequence $0\to k\to W\to C\to 0$ such that $W\approx\tr\syz^{t-1}(k^{\oplus \r(R)})$ and $C\approx\tr\syz^{t+1}\tr\syz^t k$.
There exists an exact sequence
$$
\Tor_{i+1}(W,M)\to\Tor_{i+1}(C,M)\to\Tor_{i}(k,M)\to\Tor_{i}(W,M)
$$
for any integer $i$.
As $W$ is $t$-spherical, $\Tor_i(W,M)=0$ for all $i>t$.
Moreover, by Lemma \ref{exttor}, we have the isomorphisms
$$
\Tor_i(W,M)\cong\Ext^{t-i}(\Ext^t(W,R),M)\cong\Ext^{t-i}(\Ext^t(\tr\syz^{t-1}k,R),M)^{\oplus\r(R)}\cong\Ext^{t-i}(k,M)^{\oplus\r(R)}
$$
for any integer $1\le i\le t$.
By these isomorphisms and the assumption on $M$, we obtain that $\Tor_i(W,M)=0$ for all integers $1\le i\le t$.
Consequently, $\Tor_i(W,M)=0$ for all $i>0$, that is, $\Tor_{i+1}(C,M)\cong\Tor_{i}(k,M)\cong k^{\oplus\beta_{i}(M)}$ for all $i>0$.
Note that $\syz^{t}k$ is $(t+1)$-torsionfree and therefore $\Ext^i(C,R)=0$ for all integers $1\le i\le t+1$ by \cite[Proposition 1.1.1]{I}.
We have
$$
\tr\syz^{t+1}C\approx\tr\syz^{t+1}\tr\syz^{t+1}\tr\syz^t k\cong\tr\syz^t k.
$$
Using Lemma \ref{exttor}, we obtain that
$$
\Tor_i(C,M)\cong\Ext^{t+2-i}(\tr\syz^{t+1}C,M)\cong\Ext^{t+2-i}(\tr\syz^t k,M)
$$
for all integers $1\le i\le t+1$.
Hence $\Ext^i(\tr\syz^t k,M)\cong\Tor_{t+2-i}(C,M)\cong k^{\oplus\beta_{t+1-i}(M)}$ for all integers $1\le i\le t$ and the assertion (1) follows.
Also, the exact sequence $0\to k\to W\to C\to0$ satisfies the assumption of Lemma \ref{connec}.
There exists a long exact sequence
\begin{equation}\label{long}
\Tor_1(W,M)\to\Tor_1(C,M)\to\Ext^1(\tr k,M)\to\Ext^1(\tr W,M)\to\Ext^1(\tr C,M)\to\cdots.
\end{equation}
Note that $\Ext^i(\tr W,M)\cong\Ext^{t-1+i}(k,M)^{\oplus\r(R)}\cong k^{\oplus\r(R)\mu_{t-1+i}(M)}$ and $\Ext^i(\tr C,M)\cong\Ext^{t+1+i}(\tr\syz^t k,M)$ for all $i>0$.
Moreover, since $\tr k$ has projective dimension at most one, $\Ext^i(\tr k,M)=0$ for all $i>1$.
By the long exact sequence (\ref{long}), we have $\Ext^{t+1+i}(\tr\syz^t k,M)\cong\Ext^i(\tr C,M)\cong\Ext^i(\tr W,M)\cong k^{\oplus \r(R)\mu_{t-1+i}(M)}$ for all $i>1$.
The assertion (3) holds.
By Lemma \ref{let}, $\Ext^1(\tr k,M)\cong k^{\oplus\beta_0(M)}$.
Since $\Tor_1(W,M)=0$ and $\Tor_1(C,M)\cong\Ext^{t+1}(\tr\syz^t k,M)$, we have the exact sequence $0\to \Ext^{t+1}(\tr\syz^t k,M)\to k^{\beta_0(M)}\to k^{\oplus\r(R)\mu_t(M)}\to\Ext^{t+2}(\tr\syz^{t}k,M)\to0$ by (\ref{long}). 
The assertion (2) follows.
\end{proof}

\begin{rmk}\label{t+1rmk}
Let $M$ be an $R$-module and $I$ an ideal of $R$.
We denote by $(0\underset{M}{:}I)$ the submodule of $M$ consisting of elements $x$ of $M$ such that $Ix=0$.
Suppose that $R$ is local with residue field $k$, and has depth 0.
For the proof of Theorem \ref{t+1} in the case $t=0$, the map $\sigma_{k, M}:k\otimes_R M\to\Hom_R(k^\ast, M)$ is given by $\sigma_{k, M}(a\otimes x)(f)=f(a)x$ for $a\in k$, $x\in M$ and $f\in k^\ast$.
It is easily seen that the image of the map $\sigma_{k, M}$ is isomorphic to the module $M/{(0 \underset{M}{:} (0 \underset{R}{:} \m))}$, where $\m$ is the maximal ideal of $R$.
Therefore, by the proof of Theorem \ref{t+1}, the equalities $\dim_k\Ext^1_R(\tr k, M)=\beta_0(M)-\dim_k M/{(0 \underset{M}{:} (0 \underset{R}{:} \m))}$ and $\dim_k\Ext^2_R(\tr k, M)=\r(R)\mu_0(M)-\dim_k M/{(0 \underset{M}{:} (0 \underset{R}{:} \m))}$ hold. 
 \end{rmk}

Letting $M=R$ in Theorem \ref{t+1}, we obtain the following corollary.

\begin{cor}\label{M=R}
Let $(R,\m,k)$ be local and with depth $t$.
The module $\Ext^i_R(\tr\syz^t k,R)$ is a $k$-vector space for all integers $i>0$ and the following hold.
\begin{enumerate}[\rm(1)]
   \item
   One has $\Ext^i_R(\tr\syz^t k,R)=0$ for all integers $1\le i\le t+1$.
   \item
   One has $\dim_k \Ext^{t+2}_R(\tr\syz^t k,R)=\r(R)^2 -1$.
   \item
   One has $\dim_k\Ext^{i}_R(\tr\syz^t k,R)=\r(R)\mu_{i-2}(R)$ for all integers $i\ge t+3$.
\end{enumerate}
\end{cor}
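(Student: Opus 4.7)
The plan is to obtain the corollary as a direct specialization of Theorem \ref{t+1} to $M=R$, coupled with one auxiliary input on the torsionfreeness of $\syz^t k$. Since $\depth R=t$, the module $R$ trivially has depth at least $t$, so all three parts of Theorem \ref{t+1} apply with $M=R$; in particular every $\Ext^i_R(\tr\syz^t k,R)$ is automatically a $k$-vector space for $i>0$. The numerical invariants specialize as $\beta_0(R)=1$, $\beta_j(R)=0$ for $j\ge 1$, and $\mu_t(R)=\r(R)$ by the very definition of type.

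For part (1), Theorem \ref{t+1}(1) gives the vanishing for $1\le i\le t$ because $\beta_{t-i+1}(R)=0$ throughout that range. The remaining case $i=t+1$ requires a separate ingredient: I would invoke \cite[Theorem 4.1(2)]{DT}, already used in the proof of Corollary \ref{kapp}, which asserts that $\syz^t k$ is $(t+1)$-torsionfree. By the definition of $(t+1)$-torsionfreeness this is precisely the statement $\Ext^i_R(\tr\syz^t k,R)=0$ for all $1\le i\le t+1$, so (1) is in hand.

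For part (2), substituting $\beta_0(R)=1$, $\mu_t(R)=\r(R)$, and the vanishing $\Ext^{t+1}_R(\tr\syz^t k,R)=0$ obtained from (1) into Theorem \ref{t+1}(2) yields
$$0-\dim_k\Ext^{t+2}_R(\tr\syz^t k,R)=1-\r(R)^2,$$
which is exactly (2). Part (3) is Theorem \ref{t+1}(3) specialized to $M=R$ verbatim.

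There is no real obstacle: the heavy lifting has already been done in Theorem \ref{t+1}. The only subtlety to keep in mind is that the formula in Theorem \ref{t+1}(1) does not cover the index $i=t+1$, so one must import the Dey--Takahashi $(t+1)$-torsionfreeness of $\syz^t k$ to close that single extra case before (2) can be read off.
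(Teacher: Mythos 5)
Your proposal is correct and follows essentially the same route as the paper: specialize Theorem \ref{t+1} to $M=R$, use $\beta_0(R)=1$, $\beta_j(R)=0$ for $j\ge 1$, and $\mu_t(R)=\r(R)$, and import the Dey--Takahashi fact that $\syz^t k$ is $(t+1)$-torsionfree to handle the index $i=t+1$ before reading off (2). You correctly identify the one extra ingredient beyond Theorem \ref{t+1} and apply it exactly as the paper does.
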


\begin{proof}
The assertion (3) immediately follows from Theorem \ref{t+1}(3).
Note that $\syz^t k$ is $(t+1)$-torsionfree and therefore $\Ext^{t+1}(\tr\syz^t k,R)=0$.
The assertions (1) and (2) follow from Theorem \ref{t+1}(1)(2).
\end{proof}

The equivalence of (1-a) and (1-b) in the following corollary was proved by Dey and Takahashi \cite[Theorem 4.5(1)]{DT}.
Using the above corollary, we can prove that these conditions are also equivalent to the condition (1-c).
Moreover, we can state a criterion for the local ring $R$ to be Gorenstein.

\begin{cor}\label{DTtype}
Let $(R,\m,k)$ be local and with depth $t$.
The following hold.
\begin{enumerate}[\rm(1)]
   \item
   The following are equivalent.
   \begin{itemize}
   \item[(1-a)]   The local ring $R$ has type one.
   \item[(1-b)]   The module $\syz^t k$ is $(t+2)$-syzygy.
   \item[(1-c)]   The module $\syz^t k$ is $(t+2)$-torsionfree.
\end{itemize}
\item
One has $\Ext^i_R(\tr\syz^t k,R)=0$ for some integer $i\ge t+3$ if and only if $R$ is Gorenstein.
\end{enumerate}
\end{cor}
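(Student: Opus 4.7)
The plan is to deduce both parts directly from Corollary \ref{M=R}, which gives complete numerical information on $\dim_k \Ext^i_R(\tr\syz^t k, R)$ for every $i>0$. The only quantities appearing in those formulas are the type $\r(R)$, which is always $\ge 1$ for a local ring, and the Bass numbers $\mu_{i-2}(R)$, so vanishing of these Ext modules is ultimately controlled by vanishing of Bass numbers.

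For part (1), the implication (1-c) $\Rightarrow$ (1-b) is standard, since any $(t+2)$-torsionfree module is a $(t+2)$nd syzygy by \cite[Proposition 1.1.1]{I}, and (1-b) $\Leftrightarrow$ (1-a) is exactly the result of Dey--Takahashi \cite[Theorem 4.5(1)]{DT} recalled in the introduction. So the only new content is (1-a) $\Rightarrow$ (1-c). Here I would observe that Corollary \ref{M=R}(1) already gives $\Ext^i_R(\tr\syz^t k, R) = 0$ for $1 \le i \le t+1$, so $\syz^t k$ is $(t+2)$-torsionfree if and only if the single remaining Ext module $\Ext^{t+2}_R(\tr\syz^t k, R)$ vanishes. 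By Corollary \ref{M=R}(2) its $k$-dimension equals $\r(R)^2 - 1$, which vanishes exactly when $\r(R) = 1$.

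For part (2), Corollary \ref{M=R}(3) gives $\dim_k \Ext^i_R(\tr\syz^t k, R) = \r(R)\mu_{i-2}(R)$ for every $i \ge t+3$, so vanishing is equivalent to $\mu_{i-2}(R) = 0$ with $i-2 \ge t+1 > t = \depth R$. The same vanishing theorem of Roberts \cite[II. Theorem 2]{Rob} used in Corollary \ref{TYYcor} then forces $R$ to be Gorenstein. Conversely, if $R$ is Gorenstein, then $\id_R R = \dim R = \depth R = t$, whence $\mu_j(R) = 0$ for every $j > t$, and in particular $\mu_{i-2}(R) = 0$ for every $i \ge t+3$.

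The main obstacle is essentially absent here: Corollary \ref{M=R} has already done the heavy lifting, and the remaining work is to read off the numerical formulas and invoke one classical Bass/Roberts-type vanishing result for $\mu_i(R)$. The only subtle point worth being explicit about is that $\r(R) \ge 1$ always holds for a local ring, so that vanishing of $\r(R)^2 - 1$ really is equivalent to $\r(R)=1$ and vanishing of $\r(R)\mu_{i-2}(R)$ really is equivalent to $\mu_{i-2}(R)=0$.
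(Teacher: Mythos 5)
Your proposal is correct and follows essentially the same route as the paper: part (1) reduces to Corollary~\ref{M=R}(1)(2) plus the Dey--Takahashi equivalence, and part (2) reads off Corollary~\ref{M=R}(3) and invokes Roberts' vanishing theorem. The extra remarks you add---that $\r(R)\geqslant 1$ so that $\r(R)^2-1=0$ and $\r(R)\mu_{i-2}(R)=0$ really are equivalent to $\r(R)=1$ and $\mu_{i-2}(R)=0$, and the converse direction of (2) via $\operatorname{id}_R R=\depth R$---are minor elaborations of steps the paper leaves implicit.
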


\begin{proof}
(1) The equivalence $\text{(1-a)}\Leftrightarrow\text{(1-b)}$ follows from  \cite[Theorem 4.5(1)]{DT}.
The equivalence $\text{(1-a)}\Leftrightarrow\text{(1-c)}$ follows from Corollary \ref{M=R}(1)(2).

(2) If $\Ext^i(\tr\syz^t k,R)=0$ for some integer $i\ge t+3$, then $\mu_{i-2}(R)=0$ by Corollary \ref{M=R}(3).
By \cite[II. Theorem 2]{Rob}, we conclude that $R$ is Gorenstein.
The converse is clear.
\end{proof}



\begin{ac}
The author is grateful to his supervisor Ryo Takahashi for his many helpful comments.
\end{ac}

\end{document}